\newcommand{\R}{\mathbb{R}}
\newtheorem{theorem}{Theorem}[section]
\newtheorem{lemma}[theorem]{Lemma}
\newtheorem{proposition}[theorem]{Proposition}
\newtheorem{corollary}[theorem]{Corollary}
\theoremstyle{definition}
\newtheorem{example}[theorem]{Example}
\newtheorem{remark}[theorem]{Remark}
\begin{document}

\title{General Bourgin--Yang theorems}
\author{Zbigniew B{\l}aszczyk}
\author{Wac{\l}aw Marzantowicz}
\author{Mahender Singh}

\address{Faculty of Mathematics and Computer Science, Adam Mickiewicz University of Pozna{\'n}, ul. Umultowska 87, 61-614 Pozna{\'n}, Poland.}
\email{blaszczyk@amu.edu.pl}
\email{marzan@amu.edu.pl}

\address{Indian Institute of Science Education and Research (IISER) Mohali, Sector 81, Knowledge City, SAS Nagar (Mohali), Post Office Manauli, Punjab 140306, India.}
\email{mahender@iisermohali.ac.in}

\subjclass[2010]{Primary 55M20, 57S10; Secondary 55N91.}
\keywords{Borsuk--Ulam theorem, Bourgin--Yang theorem, cohomological length, coincidence set, equivariant map, representation sphere}

\begin{abstract}
We describe a unified approach to estimating the dimension of $f^{-1}(A)$ for any $G$-equivariant map $f \colon X \to Y$ and any closed $G$-invariant subset $A\subseteq Y$ in terms of connectivity of $X$ and dimension of~$Y$, where $G$ is either a cyclic group of order~$p^k$, a $p$-torus ($p$ a prime), or a torus.
\end{abstract}

\maketitle

\section{Introduction}
The celebrated Borsuk--Ulam theorem \cite{BU_original} states that the existence of a continuous map $f \colon S(\mathbb{R}^n) \to S(\mathbb{R}^m)$ between spheres in Euclidean spaces with the property \mbox{$f(-x)=-f(x)$} for all $x \in S(\R^n)$ implies that $n\leq m$. Consequently, if \mbox{$g \colon S(\mathbb{R}^n) \to \mathbb{R}^m$} is a continuous map with that property and $n>m$, then there exists $x_0\in S(\mathbb{R}^n)$ such that $g(x_0)= 0$. Bourgin~\cite{Bourgin} and Yang \cite{Yang1}, \cite{Yang2} showed independently that in this situation the set
\[ Z_g=\{x\in S(\mathbb{R}^n) \,|\, g(x)= 0\}\]
is of dimension at least $n-m-1$.

The Borsuk--Ulam theorem proved to be one of the most useful tools of elementary algebraic topology. For this reason, it has numerous and far reaching extensions and generalizations, and continues to attract attention. Instead of repeating the story of why this is so, we refer the reader to a book by Matou\v{s}ek~\cite{Matousek} and/or a survey by \v{Z}ivaljevi\'{c}~\cite{Zivaljevic}. Let us only mention, very tersely, that Borsuk--Ulam theorems often allow to infer that combinatorial problems have a solution. From this perspective, Bourgin--Yang theorems yield information about the size of the set of those solutions. Interestingly enough, there are not nearly as many papers emphasising the latter point of view. Furthermore, to the best of our knowledge, those which exist rely on having a sphere as the (co)domain of the equivariant map in question (e.g.~Marzantowicz, de~Mattos and dos Santos~\cite{MMS1},~\cite{MMS2}) or are specifically aimed at estimating the size of the so called coincidence sets (e.g.~Munkholm \cite{Munkholm2},~\cite{Munkholm3}, Volovikov~\cite{Volovikov-1},~\cite{Volovikov1}).

In this paper we investigate further extensions of the Bourgin--Yang theorem. These extensions are two-fold in nature. Firstly, we consider symmetries provided by more complicated groups than $\mathbb{Z}_2$, namely we take $G$ to be the cyclic group $\mathbb{Z}_{p^k}$ of order~$p^k$, a $p$-torus~$(\mathbb{Z}_p)^k$ ($p$ a prime in both cases), or a torus $(\mathbb{S}^1)^k$. (See Remark \ref{rem:explanation_G} for a brief explanation on why these are essentially the only reasonable classes of groups to consider.) Secondly, we work with more general spaces than spheres and Euclidean spaces. We merely assume that there exists a $G$-equivariant map $f \colon X \to Y$ with $X$ compact and~$Y$ of finite covering dimension. Then, roughly speaking, our results state that the dimension of $f^{-1}(A)$ for any closed $G$-invariant subspace $A \subseteq Y$ can be measured in terms of the difference of connectivity of $X$ and dimension of $Y$. This idea is based on the work of Clapp and Puppe \cite{Clapp-Puppe}, where a general version of the Borsuk--Ulam theorem is proved.

We were also motivated by a recent exposition of various guises of the topological Tverberg theorem by Blagojevi{\'c} and Ziegler \cite{BZ}. Its prime power version states that for any continuous map $\Delta_N \to \mathbb{R}^m$ with $N \geq (m+1)(p^k-1)$ there exist $p^k$ points in pairwise disjoint faces of the standard $N$-simplex $\Delta_N$ that are mapped to the same point of $\mathbb{R}^m$. We estimate the dimension of the set of such points in terms of the difference between~$N$ and $(m+1)(p^k-1)$, hence obtaining a ``Bourgin--Yang version'' of the topological Tverberg theorem.

Our paper is similar in spirit to that of Volovikov \cite{Volovikov3}. Namely, he defines a numerical $G$-index $i'$ and then derives an ``abstract'' Bourgin--Yang theorem,
\[ i'\big(f^{-1}(A)\big) \geq i'(X) - i'(Y\setminus A). \]
We proceed similarly, with $i'$ replaced with the $(\mathcal{A}, h^*, I)$-length $\ell$ defined by Bartsch~\cite{Bartsch}. The main difference is that we concentrate our efforts on relating $\ell$ to more familiar invariants (such as cohomological or covering dimension) in order to obtain results which resemble the statement of the classical Bourgin--Yang theorem as closely as possible, while Volovikov opts to apply his index to a variety of other problems. As far as applications go, we restrict attention to estimating the size of coincidence sets of maps $X \to \mathbb{R}^m$.

The presented approach provides a unified framework for various types of groups: all of our results arise as variations on a single scheme. It is perhaps worth pointing out that despite that we actually recover the best known results in some classical cases.

The paper is organized as follows.
\begin{itemize}
\item Section \ref{section:preliminatries} consists of preliminaries. Perhaps most importantly, in Subsection \ref{subsection:length} we review the notion and properties of the $(\mathcal{A}, h^*, I)$-length.
\item In Section \ref{section:length_computation} we explain our strategy and carry out certain computations of the $(\mathcal{A}, h^*, I)$-length for various triples $(\mathcal{A}, h^*, I)$.
\item Sections 4 and 5 contain our versions of the Bourgin--Yang theorem for $G=\mathbb{Z}_{p^k}$ and $(\mathbb{S}^1)^k$ (Theorems \ref{thm:BY_cyclic_pk} and Theorem \ref{B-Y for torus}, respectively) and their consequences. Most notably, in the former case we derive estimates on the size of coincidence sets of maps $X \to \R^m$ and discuss how those compare to the previous results of Munkholm~\cite{Munkholm2}, \cite{Munkholm3} and Volovikov \cite{Volovikov-1}.
\item In Section 6 we deal with $G = (\mathbb{Z}_p)^k$. Here the situation is somewhat more complicated and our result is less general (Proposition \ref{B-Y for spaces with the least orbit type}). However, we are still able to recover an estimate that implies the topological Tverberg theorem for prime powers (Theorem \ref{thm:BYTverberg}).
\end{itemize}

\section{Preliminaries}\label{section:preliminatries}

\subsection{Notation.}

We shall use standard notation of transformation group theory, see~\cite{Bredon}. Unless otherwise stated, $G$ denotes a compact Lie group. Throughout the paper $\dim X$ stands for the covering dimension of a space $X$. We note that if $X$ is normal and $A \subseteq X$ is a closed subspace, then $\dim A \leq \dim X$. We will also write
\[ \textnormal{cdim}\,X=\max\{ n\geq 0 \,|\, H^n(X;\mathbb{F}) \neq 0\}, \]
where $H^n(-;\mathbb{F})$ denotes the Alexander--Spanier cohomology with coefficients $\mathbb{F}=\mathbb{Z}_p$ or $\mathbb{F}=\mathbb{Q}$, depending on whether $G=(\mathbb{Z}_p)^k$ or $G=(\mathbb{S}^1)^k$. Since we are working with Alexander--Spanier theory, $\textnormal{cdim}\,X \leq \dim X$.

\subsection{Existence of equivariant maps}

We will make use of the following result from equivariant obstruction theory in order to infer the existence of equivariant maps between certain $G$-spaces.

\begin{theorem}[{\cite[Chapter II, Proposition 3.15]{Dieck}}]\label{obstruction_easy}
Let $n \geq 1$ be an integer. Suppose that $(X,A)$ is a relative $G$-complex with free action on $X \setminus A$ and $Y$ is an $n$-simple and $(n-1)$-connected $G$-space. Then any $G$-equivariant map $A \to Y$ can be extended over the $n$-skeleton of $X$.
\end{theorem}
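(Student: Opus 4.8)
\emph{Proof idea.} The plan is to build the extension one skeleton at a time, using that freeness of the $G$-action on $X\setminus A$ forces every relative cell of the $G$-CW pair $(X,A)$ to be \emph{free}, i.e.\ of the form $G\times D^{q}$ with $G$ acting by left translation on the first factor. Write $X^{(q)}$ for the union of $A$ with the cells of $(X,A)$ of dimension $\le q$, so $X^{(-1)}=A$. I would prove by induction on $q$ that the given $G$-map $f\colon A\to Y$ extends to a $G$-map $X^{(q)}\to Y$ for every $q\le n$; the case $q=n$ is the assertion.

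For the inductive step, assume $f$ has been extended $G$-equivariantly over $X^{(q-1)}$ with $1\le q\le n$. Group the relative $q$-cells into $G$-orbits and choose one representative per orbit, with characteristic map $\Phi\colon G\times D^{q}\to X^{(q)}$ and attaching map $\varphi\colon G\times S^{q-1}\to X^{(q-1)}$. Restricting to $\{e\}\times D^{q}\cong D^{q}$ gives a bijection between $G$-maps out of $G\times D^{q}$ and arbitrary maps out of $D^{q}$ (and likewise for $G\times S^{q-1}$ and $S^{q-1}$), precisely because the cell is free. Hence extending $f$ over the orbit of this cell is equivalent to extending the non-equivariant composite $S^{q-1}\to X^{(q-1)}\xrightarrow{\ f\ }Y$ (induced by $\varphi|_{\{e\}\times S^{q-1}}$) over the disk $D^{q}$ with boundary $S^{q-1}$. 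Such an extension exists iff that composite is null-homotopic, and this holds because $q-1\le n-1$ and $Y$ is $(n-1)$-connected, so $\pi_{q-1}(Y,y)=0$ for all $y\in Y$. Transporting the chosen extension across the $G$-orbit by the action yields a $G$-equivariant extension over all $q$-cells, hence over $X^{(q)}$. The base case $q=0$ is identical: each free orbit $G\times D^{0}$ of $0$-cells may be sent to any chosen point of $Y$, which is possible since $(n-1)$-connectedness with $n\ge 1$ makes $Y$ nonempty and path-connected.

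The delicate point is how the hypotheses enter at the bottom of the induction. For $n\ge 2$ the space $Y$ is simply connected, so $\pi_{q-1}(Y)$ is canonically defined, the $n$-simplicity hypothesis is automatic, and nothing further is needed. For $n=1$ one extends only over $0$- and $1$-cells: path-connectedness of $Y$ lets us fill in the $1$-cells regardless of the values chosen on the $0$-cells, and $1$-simplicity is exactly the condition making the attendant obstruction theory consistent. I expect the real work to be organizational rather than conceptual: arranging the cell-by-cell construction so that equivariance is manifest — fix one cell per $G$-orbit, perform the non-equivariant extension there, and carry it around the orbit by the action, checking well-definedness (which uses freeness) and continuity (which uses the $G$-CW structure of $(X,A)$ and the homeomorphism $\mathrm{map}_{G}(G\times D^{q},Y)\cong\mathrm{map}(D^{q},Y)$). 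Equivalently, everything can be repackaged via Bredon cohomology: the obstruction to $G$-equivariantly extending over the $q$-skeleton is a class in $H^{q}_{G}\big(X,A;\underline{\pi_{q-1}(Y)}\big)$, which for a complex that is free away from $A$ reduces to ordinary cohomology with $\pi_{q-1}(Y)$-coefficients and hence vanishes for every $q\le n$.
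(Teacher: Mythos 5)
The paper does not prove this statement; it is quoted verbatim from tom~Dieck \cite[Chapter~II, Proposition~3.15]{Dieck} and used as a black box. Your proof is the standard cell-by-cell equivariant obstruction argument — freeness of the action on $X \setminus A$ makes every relative cell free, so equivariant extensions over an orbit $G \times D^q$ of cells correspond to non-equivariant extensions over $D^q$, and these exist because the attaching sphere $S^{q-1}$ maps to $Y$ null-homotopically when $q - 1 \le n - 1$. That is exactly tom~Dieck's argument, and it is correct.

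One small point worth flagging: the $n$-simplicity hypothesis plays no role in the extension to the $n$-skeleton that you (and the theorem) assert. It is needed only to make the \emph{obstruction cocycle} to a further extension over the $(n+1)$-skeleton well-defined (independent of basepoints), which is what the full statement of tom~Dieck's Proposition~3.15 is concerned with. Your remark that ``$1$-simplicity is exactly the condition making the attendant obstruction theory consistent'' is therefore true of the broader obstruction theory but not actually used in the step you are proving; for $n \ge 2$ the hypothesis is vacuous since $(n-1)$-connectedness already forces $Y$ to be simply connected. This does not affect the correctness of your argument.
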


\subsection{The $(\mathcal{A}, h^*,I)$-length}\label{subsection:length}

Let $\mathcal{A}$ be a set of $G$-spaces, $h^*$ a multiplicative equivariant cohomology theory, and $I\subseteq h^*(\textnormal{pt})$ an ideal. Here $h^*$ is \textit{multiplicative} if it has a natural, bilinear, associative and anti-commutative product
\[ h^m(X,A) \times h^n(X,B) \to h^{m+n}(X,A\cup B) \]
with unit in $h^0(X)$, defined for all excisive $G$-pairs $\{A,B\}$. The \textit{$(\mathcal{A}, h^*, I)$-length} of a $G$-space $X$ is defined to be the smallest integer $k\geq 1$ such that there exist \mbox{$A_1$, \ldots, $A_k \in \mathcal{A}$} with the property that for any $\alpha_i \in I \cap\ker\!\big[h^*(\textnormal{pt}) \to h^*(A_i)\big]$, $1 \leq i \leq k$,
\[ p_X^*(\alpha_1) \smile \cdots \smile p_X^*(\alpha_k) =0,\]
where $p_X \colon X \to \textnormal{pt}$.

Throughout the paper we will make use of either Borel cohomology theory $H_G^*$ modelled on Alexander--Spanier theory, i.e., $H_G^*(X)=H^*(EG \times_G X)$, where $EG$ is the universal $G$-space and $EG \times_G X = (EG \times X)/G$ with respect to the diagonal $G$-action, or equivariant $K$-theory $K_G^*$.

\begin{remark}[{\cite[Observation 5.5]{Bartsch}}]\label{remark:AA'}
Assume that $\mathcal{A}'\subseteq \mathcal{A}$. It is then clear that $(\mathcal{A}, h^*, I)$-length $\leq (\mathcal{A}', h^*, I)$-length. Furthermore, if, additionally, for any $A \in \mathcal{A}$ there exist $A'\in\mathcal{A}'$ and a $G$-equivariant map $A \to A'$, then $(\mathcal{A}, h^*, I)$-length $= (\mathcal{A}', h^*, I)$-length.
\end{remark}

Fix a triple $(\mathcal{A}, h^*, I)$ and write $\ell$ for the $(\mathcal{A}, h^*, I)$-length. In what follows we implicitly assume that all considered spaces are such that $\ell$ is defined for them.

\begin{remark}\label{remark:warning_KG}
Note that even though $K_G^*$ is defined for locally compact $G$-spaces, a $K_G^*$-based length makes sense only for compact $G$-spaces: $K_G^*$ is functorial only for proper maps, hence it is not possible to consider the map induced by $p_X \colon X \to \textnormal{pt}$ on~$K_G^*$ for a non-compact $X$. We mention this specifically because it happens to be a prevalent mistake in \cite{MMS1}. In fact, our work can be seen as an erratum for that paper: even though the arguments therein are flawed, the results are still true.
\end{remark}

\begin{theorem}[{\cite[Theorem 4.7]{Bartsch}}]\label{ell_properties}
The length has the following properties.
\begin{enumerate}
\item[\textnormal{(1)}] If there exists an $h^*$-functorial $G$-equivariant map $X \to Y$, then $\ell(X)\leq \ell(Y)$.
\item[\textnormal{(2)}] Let $A$, $B\subseteq X$ be $G$-invariant subspaces such that
\[ h^*(X,A) \times h^*(X,B) \stackrel{\smile}{\longrightarrow} h^*(X, A\cup B) \]
is defined. If $A \cup B = X$, then $\ell(X)\leq \ell(A)+\ell(B)$.
\item[\textnormal{(3)}] If $h^*=H_G^*$, the ideal $I$ is noetherian and $X$ is paracompact, then any closed $G$-invariant subspace $A \subseteq X$ has an open $G$-invariant neighborhood $U\subseteq X$ such that $\ell(U)=\ell(A)$.
\end{enumerate}
\end{theorem}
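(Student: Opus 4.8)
The three statements all rest on naturality and multiplicativity of $h^*$, together with a continuity property in the case of (3). For (1), let $k=\ell(Y)$ and pick $A_1,\dots,A_k\in\mathcal{A}$ realizing it. If $\phi\colon X\to Y$ is the given $h^*$-functorial $G$-map, then $p_X=p_Y\circ\phi$, so $p_X^*=\phi^*\circ p_Y^*$; since $\phi^*$ is a ring homomorphism (multiplicativity being natural), for any admissible $\alpha_i$ we get $p_X^*(\alpha_1)\smile\cdots\smile p_X^*(\alpha_k)=\phi^*\big(p_Y^*(\alpha_1)\smile\cdots\smile p_Y^*(\alpha_k)\big)=\phi^*(0)=0$. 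Hence the same $A_1,\dots,A_k$ witness $\ell(X)\le k$.

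For (2), put $m=\ell(A)$, $n=\ell(B)$ and choose witnesses $A_1,\dots,A_m$ for $A$ and $B_1,\dots,B_n$ for $B$; I claim $A_1,\dots,A_m,B_1,\dots,B_n$ witness $\ell(X)\le m+n$. Fix admissible $\alpha_i$ (for the $A_i$) and $\beta_j$ (for the $B_j$). By definition $p_A^*(\alpha_1\smile\cdots\smile\alpha_m)=0$ in $h^*(A)$, so exactness of the sequence of the pair $(X,A)$ produces a class $\tilde u\in h^*(X,A)$ mapping to $p_X^*(\alpha_1\smile\cdots\smile\alpha_m)$ under $h^*(X,A)\to h^*(X)$; similarly there is $\tilde v\in h^*(X,B)$ lifting $p_X^*(\beta_1\smile\cdots\smile\beta_n)$. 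Since $\{A,B\}$ is excisive the product $\tilde u\smile\tilde v$ lives in $h^*(X,A\cup B)=h^*(X,X)=0$, and it maps to $p_X^*(\alpha_1\smile\cdots\smile\alpha_m)\smile p_X^*(\beta_1\smile\cdots\smile\beta_n)$ in $h^*(X)$ by naturality of the relative cup product. Therefore this product vanishes, which is exactly what is required.

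For (3), note first that $\ell(A)\le\ell(U)$ for every $G$-invariant open $U\supseteq A$ by (1), so it suffices to produce such a $U$ with $\ell(U)\le k:=\ell(A)$. Fix witnesses $A_1,\dots,A_k$ and set $J_i=I\cap\ker[H_G^*(\textnormal{pt})\to H_G^*(A_i)]$, a finitely generated ideal because $I$ is noetherian; let $\alpha_i^{(1)},\dots,\alpha_i^{(s_i)}$ generate $J_i$. Since $(\alpha_1,\dots,\alpha_k)\mapsto p_A^*(\alpha_1\smile\cdots\smile\alpha_k)$ is multilinear, $\ell(U)\le k$ will follow once each of the finitely many products $p_U^*(\alpha_1^{(t_1)})\smile\cdots\smile p_U^*(\alpha_k^{(t_k)})$ vanishes in $H_G^*(U)$. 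Now $EG\times_G A$ is a closed subspace of the paracompact space $EG\times_G X$, hence taut, so $H^*(EG\times_G A)=\varinjlim_W H^*(W)$ over its open neighborhoods; moreover for a compact Lie group every neighborhood of an invariant set contains an invariant one (a tube-lemma argument applied to the action map $G\times X\to X$), so the colimit may be taken over neighborhoods of the form $EG\times_G U$ with $U\subseteq X$ open and $G$-invariant. Each product above is zero in $H_G^*(A)$, hence already zero in $H_G^*(U_{(t_1,\dots,t_k)})$ for some such $U_{(t_1,\dots,t_k)}$; intersecting the finitely many $U_{(t_1,\dots,t_k)}$ gives a single $G$-invariant open $U\supseteq A$ over which all of them vanish, whence $\ell(U)\le k$ and $\ell(U)=\ell(A)$.

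The routine parts are the ring-homomorphism property in (1) and the exact-sequence bookkeeping in (2). The real work is concentrated in (3): one must be sure that Borel cohomology modelled on Alexander--Spanier theory is continuous along the closed subspace $EG\times_G A$ of the paracompact space $EG\times_G X$, that $G$-invariant open neighborhoods are cofinal among all neighborhoods, and --- crucially --- that the noetherian hypothesis reduces the a priori infinite family of product conditions to finitely many, so that a single neighborhood can be chosen to handle all of them at once.
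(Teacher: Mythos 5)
The paper does not prove this theorem; it simply cites it from Bartsch's book (Theorem~4.7 there), so you are supplying a proof where the paper gives none. With that understood: your arguments for (1) and (2) are correct and are the standard ones. For (1) the point is exactly that $p_X^*=\phi^*\circ p_Y^*$ and $\phi^*$ is a ring map; for (2) the lift-to-relative-classes-and-multiply argument is precisely right, and you correctly note where excisiveness and the vanishing $h^*(X,X)=0$ enter.

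Part (3) contains a genuine gap. Your strategy (use the noetherian hypothesis to reduce the a priori infinite family of product conditions to finitely many, then invoke continuity/tautness to push the vanishing from $A$ to a single neighborhood) is indeed the correct one, and the multilinearity reduction is fine. The problem is the cofinality claim. Tautness of the closed subspace $EG\times_G A$ in the paracompact space $EG\times_G X$ gives $H^*(EG\times_G A)=\varinjlim_W H^*(W)$ over \emph{all} open neighborhoods $W$; you then assert that the subfamily $\{EG\times_G U\}$, with $U$ ranging over $G$-invariant open neighborhoods of $A$ in $X$, is cofinal, justifying this by ``a tube-lemma argument applied to the action map.'' That argument only shows that an open neighborhood of a $G$-invariant set in a $G$-space contains a $G$-invariant open one; it does not show that an open $W\supseteq EG\times_G A$ must contain a set of the product form $EG\times_G U$. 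Since $EG$ is infinite-dimensional and non-compact, the tube lemma does not apply, and in fact one can write down neighborhoods $W$ that ``thin out'' as one moves up the skeleta of $EG$ and therefore contain no $EG\times_G U$ at all. So cofinality fails as stated, and the passage from ``each product dies in some $W_{(t_1,\dots,t_k)}$'' to ``each product dies in some $EG\times_G U_{(t_1,\dots,t_k)}$'' is not justified. To repair this one needs an additional argument — for instance, replacing $EG$ by a finite skeleton $E_N$ large enough to detect the finitely many cohomology classes in play (so that the relevant $H^j(EG\times_G X)\to H^j(E_N\times_G X)$ are isomorphisms in the bounded range of degrees), at which point compactness of $E_N$ does make the tube-lemma/cofinality argument work, or else establishing continuity of $H_G^*$ over $G$-invariant open neighborhoods directly at the level of (equivariant) Alexander--Spanier cochains. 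Either route requires care, which is presumably why the paper defers to Bartsch here.
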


\subsection{Cohomology of certain groups}\label{section:cohomology_rings}

We will carry out certain computations in cohomology rings of tori and $p$-tori. Let us recall the relevant information, mainly in order to fix notation.
\begin{itemize}
\item If $G=(\mathbb{Z}_2)^k$, then $H^*(G, \mathbb{Z}_2) \cong \mathbb{Z}_2[w_1, \ldots, w_k]$, where $\deg w_i =1$.
\item If $G=(\mathbb{Z}_p)^k$, $p>2$, then $H^*(G, \mathbb{Z}_p) \cong \Lambda[w_1,\ldots,w_k] \otimes \mathbb{Z}_p[c_1, \ldots, c_k]$, where $\deg w_i=1$ and $\deg c_i=2$.
\item If $G=(\mathbb{S}^1)^k$, then $H^*(G, \mathbb{Q}) \cong \mathbb{Q}[c_1, \ldots, c_k]$, where $\deg c_i=2$.
\end{itemize}

\subsection{Choice of the triple $(\mathcal{A}, h^*, I)$}\label{choice of ideal}
Depending on the group $G$, we set:
\begin{itemize}
\item if $G=(\mathbb{Z}_2)^k$: $h^*= H^*_G(-;\mathbb{Z}_2)$, $I= H_G^*(\textnormal{pt}; \mathbb{Z}_2)$,
\item if $G=(\mathbb{Z}_p)^k$, $p>2$: $h^*= H^*_G(-;\mathbb{Z}_p)$, $I=(c_1, \ldots, c_k)$,
\item if $G=(\mathbb{S}^1)^k$: $h^*= H^*_G(-;\mathbb{Q})$, $I=H^*_G(\textnormal{pt};\mathbb{Q})$.
\end{itemize}
In the three cases above, we will consider the length $\ell$ with respect to the family $\mathcal{A}=\{G/H \,|\, H \subsetneq G \textnormal{ is a closed subgroup}\}$. We will also be interested in a fourth case:
\begin{itemize}
\item if $G=\mathbb{Z}_{p^k}$: $h^*= K^*_G(-)$, $I=K_G(\textnormal{pt})$,
\end{itemize}
but this time with respect to $\mathcal{A}_{r,s} =\{G/H \,|\, \textnormal{$H \subseteq G$ and $r \leq |H| \leq s$}\}$ for two powers $1 \leq r \leq s \leq p^{k-1}$ of $p$. We will write $\ell_s$ for the $(\mathcal{A}_{r,s}, K^*_G, I)$-length. This notation makes sense since $\ell_s$ is independent of the choice of $r$ in view of Remark \ref{remark:AA'}.

\begin{remark}\label{rem:explanation_G}
Our choices for $G$ are not accidental. It is known that if a group $G$ is not an extension of a finite $p$-group of exponent $p$ by a torus, then $G$ does not have the strong Borsuk--Ulam property, i.e., there exist orthogonal $G$-representations $V$ and $W$ with $V^G=W^G=\{0\}$ and a $G$-equivariant map $S(V) \to S(W)$ such that $\dim V > \dim W$. Hence in order to hope for a ``classically flavoured'' general Bourgin--Yang theorem, one has to restrict attention to such extensions. Out of those, however, only tori and $p$-tori are known to have the strong Borsuk--Ulam property.  While $G = \mathbb{Z}_{p^k}$, $k > 1$, does not fall into either of these families, it is known that if $V$ and $W$ are as above, then the existence of a $G$-equivariant map $S(V) \to S(W)$  implies that $\dim V/p^{k-1} \leq \dim W$, so there is a threshold dimension to work with. Consult \cite{Marzantowicz} for a thorough discussion of these and related results for the case of ($p$-)tori, with a caveat that the proof of Lemma~1.2 therein is incomplete, and \cite[Corollary 5.9]{Bartsch} for the case $G =\mathbb{Z}_{p^k}$.
\end{remark}

\subsection{Coincidence sets.}\label{subsection:coincidence}

Let $X$ be a $G$-space and $Y$ any space. Given a map $f\colon X\to Y$, a natural problem is to understand the \textit{coincidence set}
\[ A_f=\{ x\in X \,|\, f(x)=f(gx) \textnormal{ for all $g\in G$}\},\]
e.g. in terms of cohomological dimension or a ``$G$-index'', as in a series of papers by Munkholm \cite{Munkholm2}, \cite{Munkholm3}, and later by Volovikov \cite{Volovikov-1}, \cite{Volovikov1}, \cite{Volovikov3}. In combinatorial applications, $Y$ is usually taken to be a Euclidean space $\mathbb{R}^m$. If $G=\{e,g_1, \ldots, g_r\}$ is a finite group (with a fixed ordering of elements), the following standard argument allows to reduce this problem to a Bourgin--Yang type situation (cf. \cite{MMS1}, \cite{Volovikov1}, \cite{Volovikov2}).

Let $W=\bigoplus_{i=1}^{r+1} \mathbb{R}^m$. Equip $W$ with a $G$-action via the formula
\[ g(w_e, w_{g_1}\ldots, w_{g_r})= (w_g, w_{gg_1}, \ldots, w_{gg_r}). \]
Clearly, this turns $W$ into an orthogonal $G$-representation. The map $\tilde{f}\colon X \to W$ given by
\[ \tilde{f}(x)= \big(f(x), f(g_1x), \ldots, f(g_rx)\big) \]
is then $G$-equivariant. The orthogonal complement ${\perp}W^G$ of $W^G$ is an orthogonal free $G$-representation of dimension $mr$. Now write $\pi^G \colon W \to {\perp}W^G$ for the canonical projection. Since $\Delta W=W^G$ and $\tilde{f}^{-1}(\Delta W) = A_{f}$, the $G$-map $\bar{f} = \pi^G\circ \tilde{f} \colon X \to {\perp}W^G$ has the property that $Z_{\bar{f}}=A_f$.

\section{The $(\mathcal{A}, h^*, I)$-length of certain spaces}\label{section:length_computation}

\subsection{An abstract Bourgin--Yang theorem}\label{subsection:abstract_BY}

Fix a triple $(\mathcal{A}, h^*, I)$ with $h^*=K_G^*$ or $h^* = H_G^*$, and additionally assume that $I$ is noetherian in the latter case. Write $\ell$ for the corresponding length.

\begin{theorem}\label{thm:general_B-Y}
Let $Y$ be a $G$-space and $A\subseteq Y$ a closed $G$-invariant subspace.
\begin{enumerate}
\item[\textnormal{$(K_G^*)$}] If $X$ is a compact $G$-space and {$f \colon X \to Y$} is a $G$-equivariant map, then
\begin{equation*}
\ell\big(f^{-1}(A)\big) \geq \ell(X) - \ell(B)
\end{equation*}
for any compact $G$-invariant subspace $B\subseteq Y$ such that $f(X)\setminus\textnormal{Int}A \subseteq B$.
\item[\textnormal{$(H_G^*)$}] If $X$ is a paracompact $G$-space and {$f \colon X \to Y$} is a $G$-equivariant map, then
\begin{equation*}
\ell\big(f^{-1}(A)\big) \geq \ell(X) - \ell(B)
\end{equation*}
for any $G$-invariant subspace $B\subseteq Y$ such that $f(X)\setminus A \subseteq B$.
\end{enumerate}
\end{theorem}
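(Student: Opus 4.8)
The plan is to treat both cases by one two-step recipe: first exhibit $X$ as a union $X = P \cup Q$ of two $G$-invariant subspaces for which the relevant relative cup product is defined, with $P = f^{-1}(A)$ (or at least $\ell(P) = \ell(f^{-1}(A))$) and with $f$ restricting to a $G$-equivariant map $Q \to B$; then combine the subadditivity $\ell(X) \le \ell(P) + \ell(Q)$ from Theorem~\ref{ell_properties}(2) with the monotonicity $\ell(Q) \le \ell(B)$ coming from Theorem~\ref{ell_properties}(1) applied to $f|_Q$. This yields $\ell(X) \le \ell\big(f^{-1}(A)\big) + \ell(B)$, which rearranges to the asserted inequality.

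For case $(K_G^*)$ I would take $P = f^{-1}(A)$ and $Q = f^{-1}(Y \setminus \textnormal{Int}A) = X \setminus f^{-1}(\textnormal{Int}A)$. Both are preimages of closed sets, hence closed $G$-invariant subspaces of the compact $G$-space $X$, and so themselves compact; moreover $P \cup Q = X$, because $X = f^{-1}(\textnormal{Int}A) \cup Q$ with $f^{-1}(\textnormal{Int}A) \subseteq f^{-1}(A) = P$. For such a pair of closed $G$-subspaces of a compact $G$-space the relative cup product $K_G^*(X, P) \times K_G^*(X, Q) \to K_G^*(X, X)$ is available, so Theorem~\ref{ell_properties}(2) gives $\ell(X) \le \ell\big(f^{-1}(A)\big) + \ell(Q)$. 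Since $f(Q) = f(X) \setminus \textnormal{Int}A \subseteq B$ by hypothesis, $f$ restricts to a $G$-equivariant map $Q \to B$; as $Q$ and $B$ are compact, this restriction is $K_G^*$-functorial (cf.~Remark~\ref{remark:warning_KG}) and $\ell(B)$ is meaningful. Then Theorem~\ref{ell_properties}(1) gives $\ell(Q) \le \ell(B)$, and combining the two bounds finishes this case.

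For case $(H_G^*)$ the subspace $f^{-1}(B)$ need no longer be closed, so the previous choice of $Q$ is unavailable and I would instead invoke the continuity statement in Theorem~\ref{ell_properties}(3). Since $h^* = H_G^*$, $I$ is noetherian and $X$ is paracompact, that statement furnishes an open $G$-invariant neighborhood $U$ of the closed $G$-invariant subspace $f^{-1}(A)$ with $\ell(U) = \ell\big(f^{-1}(A)\big)$; I then take $P = U$ and $Q = X \setminus f^{-1}(A)$. As $f^{-1}(A) \subseteq U$ we have $P \cup Q = X$, and $\{P, Q\}$ is a cover by two open sets, so the relative cup product $H_G^*(X, P) \times H_G^*(X, Q) \to H_G^*(X, X)$ is defined and Theorem~\ref{ell_properties}(2) gives $\ell(X) \le \ell\big(f^{-1}(A)\big) + \ell\big(X \setminus f^{-1}(A)\big)$. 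Because $f\big(X \setminus f^{-1}(A)\big) = f(X) \setminus A \subseteq B$, the restriction $f|_Q \colon X \setminus f^{-1}(A) \to B$ is a $G$-equivariant map, automatically $H_G^*$-functorial, so Theorem~\ref{ell_properties}(1) yields $\ell\big(X \setminus f^{-1}(A)\big) \le \ell(B)$, and again the two inequalities combine.

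The only genuinely delicate point --- and the reason the two cases carry different hypotheses --- is guaranteeing that the relative cup product demanded by Theorem~\ref{ell_properties}(2) actually exists. For $K_G^*$ this forces $X$ (and, for $\ell(B)$ to be meaningful at all, $B$) to be compact, and explains why the hypothesis features $\textnormal{Int}A$ rather than $A$: it makes $Q = f^{-1}(Y \setminus \textnormal{Int}A)$ a closed and hence compact $G$-subspace, so that both the cup product of a compact pair and the $K_G^*$-functoriality of $f|_Q$ are available --- there being no analogue of Theorem~\ref{ell_properties}(3) for $K_G^*$. For $H_G^*$, by contrast, that continuity statement lets us trade the possibly ill-behaved $f^{-1}(B)$ for the open neighborhood $U$, reducing everything to the cup product of an open couple, which always exists; this is exactly why the weaker hypothesis $f(X) \setminus A \subseteq B$ suffices there. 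Beyond that, the argument is a purely formal manipulation of properties~(1)--(3) of $\ell$.
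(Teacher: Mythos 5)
Your argument coincides with the paper's proof in both cases: the same decomposition $X = f^{-1}(A) \cup \big(X \setminus f^{-1}(\textnormal{Int}A)\big)$ for $K_G^*$, the same use of the open neighbourhood $U$ from Theorem~\ref{ell_properties}(3) paired with $V = X \setminus f^{-1}(A)$ for $H_G^*$, followed in each case by subadditivity and monotonicity. You supply more detail than the paper on why the relative cup products exist and on the $K_G^*$-functoriality of $f|_Q$, but the route is identical.
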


We note that this sort of result has been formulated by Volovikov \cite{Volovikov3} for a differently defined ``index theory''.

\begin{proof}
$(K_G^*)$ Clearly, $X = f^{-1}(A) \cup \big(X \setminus f^{-1}(\textnormal{Int}A)\big)$. In view of Theorem \ref{ell_properties}, we have:
\begin{itemize}
\item $\ell(X) \leq \ell\big(f^{-1}(A)\big) + \ell\big(X\setminus f^{-1}(\textnormal{Int}A)\big)$,
\item $\ell\big(X\setminus  f^{-1}(\textnormal{Int}A)\big) \leq \ell(B)$, because $f \colon X \setminus f^{-1}(\textnormal{Int} A) \to B$.
\end{itemize}
Combining these two inequalities yields $\ell\big(f^{-1}(A)\big) \geq \ell(X) - \ell(B)$.

$(H_G^*)$ Find an open $G$-invariant neighbourhood $U$ of
$f^{-1}(A)$ such that $\ell(U)=\ell\big(f^{-1}(A)\big)$ by means of Theorem \ref{ell_properties}. Let $V = X - f^{-1}(A)$, so that $X = U \cup V$. Now proceed exactly as above.
\end{proof}

Our immediate goal is to make the above inequalities more accessible for various classes of groups. Note that if $h^*=H_G^*$ and $V$, $W$ are two orthogonal $G$-representations, then applying Theorem \ref{thm:general_B-Y} for $X=S(V)$, $Y=W$, $A=\{0\}$ and $B=W\setminus\{0\}$ yields
\[\ell\big(f^{-1}(0)\big) \geq \ell\big(S(V)\big) - \ell\big(S(W)\big)\]
for any $G$-equivariant map $f \colon S(V) \to W$. In particular, as soon as we manage to relate $\ell$ to more familiar invariants (such as covering or cohomological dimension), we will be able to recover Bourgin--Yang type theorems in the classical framework. A similar thing happens for $h^*=K_G^*$, but this case requires slightly more work and we postpone it for Theorem \ref{thm:BY_spheres_z_pk}.

For the most part we work in a more general setting, with the core assumption being that the base space $X$ is $n$-simple and $(n-1)$-connected for some $n \geq 1$. Our strategy consists of finding:
\begin{enumerate}
\item[(1)] a lower bound depending on $n$ for $\ell$ of such a space $X$, and
\item[(2)] an upper bound for $\ell$ of an arbitrary space in terms of dimension.
\end{enumerate}

\noindent Given a $G$-space $X$, write $\mathcal{A}_X = \{G/G_x \,|\, x \in X\}$. Define $\theta$, $\theta_{-1} \colon \mathbb{N} \to \mathbb{N}$ by setting:
\[ \theta(n) = \begin{cases}
(n+1)/2, & \textnormal{$n$ is odd,}\\
n/2+1, & \textnormal{$n$ is even,}
\end{cases}
\;
\theta_{-1}(n)
= \begin{cases}
(n+1)/2, & \textnormal{$n$ is odd,}\\
n/2, & \textnormal{$n$ is even.}
\end{cases}
\]
This notation will be useful when dealing with various estimates for $\ell$. From now on $n\geq 1$ is a fixed integer and, unless otherwise stated, we are working with the lengths described in Subsection \ref{choice of ideal}.

\subsection{$\mathbf{G=\mathbb{Z}_{p^k}}$} Marzantowicz, de Mattos and dos Santos provided the following upper bound for $\ell_s$:

\begin{theorem}[{\cite[Theorem 3.5]{MMS1}}]\label{thm:ell_vs_dim_KG}
If $X$ is a compact $G$-space and $\mathcal{A}_X \subseteq \mathcal{A}_{r,s}$, then
\[ \ell_s(X)\leq \theta(\dim X). \]
\end{theorem}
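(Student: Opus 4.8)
The plan is to reduce to the case of a finite $G$-CW complex and then read the estimate off the skeletal filtration in equivariant $K$-theory; the factor $\tfrac12$ encoded in $\theta$ appears because, for a finite group $H$, the ring $K_H^*(\textnormal{pt})$ is concentrated in even degrees. Throughout, write $R(G)=K_G^0(\textnormal{pt})$ for the representation ring, and let $H_s\le G=\mathbb{Z}_{p^k}$ be the subgroup of order $s$; since $s\le p^{k-1}$ it is proper, so $G/H_s\in\mathcal{A}_{r,s}$. First I would reduce: since $X$ is a compact $G$-space (and one may assume it metrizable), it is a $G$-inverse limit of finite $G$-CW complexes $X_i$, and these may be arranged so that $\dim X_i\le\dim X$ and every stabilizer of $X_i$ lies in $H_s$ (possible because, by hypothesis, every stabilizer of $X$ has order $\le s$ and hence lies in $H_s$) --- for instance, as equivariantly triangulated nerves of a cofinal family of $G$-invariant covers of $X$ by tubes $G\times_{G_x}S_x$, using the slice theorem and the finiteness of the orbit type set of $X$. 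As $K_G^*$ is continuous on compact $G$-spaces and maps of compact spaces are proper, the structure maps $X\to X_i$ are $K_G^*$-functorial, so $\ell_s(X)\le\ell_s(X_i)$ by Theorem \ref{ell_properties}(1). Hence it suffices to bound $\ell_s(X)$ when $X$ is a finite $G$-CW complex of dimension $d:=\dim X$ all of whose stabilizers lie in $H_s$.

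Let $X^{(0)}\subseteq\cdots\subseteq X^{(d)}=X$ be the skeleta and set $F^q=\ker\big[K_G^0(X)\to K_G^0(X^{(q-1)})\big]$, so that $K_G^0(X)=F^0\supseteq F^1\supseteq\cdots\supseteq F^{d+1}=0$. Pulling the external product back along the $G$-diagonal $X\to X\times X$ gives the multiplicativity $F^a\smile F^b\subseteq F^{a+b}$, while each $F^q/F^{q+1}$ is a subquotient of $K_G^0\big(X^{(q)},X^{(q-1)}\big)$. Since $X^{(q)}/X^{(q-1)}$ is a wedge of spaces $(G/H_\alpha)_+\wedge S^q$ (over the $q$-cells, with their finite stabilizers $H_\alpha\le G$), the induction isomorphism gives
\[ K_G^0\big(X^{(q)},X^{(q-1)}\big)\;\cong\;\bigoplus_\alpha K_{H_\alpha}^{-q}(\textnormal{pt}), \]
which vanishes for odd $q$ because $K_{H_\alpha}^*(\textnormal{pt})$ is concentrated in even degrees. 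Hence $F^q=F^{q+1}$ for odd $q$; in particular $F^1=F^2$. Now every point of $X$ has stabilizer contained in $H_s$, so in particular each $0$-cell of $X$ is an orbit $G/H$ with $H\le H_s$ (the subgroups of $\mathbb{Z}_{p^k}$ being linearly ordered by inclusion). Thus for $\alpha\in\ker[R(G)\to R(H_s)]$ the restriction of $p_X^*(\alpha)$ to each $0$-cell $G/H$ equals $\operatorname{res}^{H_s}_{H}\operatorname{res}^{G}_{H_s}(\alpha)=0$, so $p_X^*(\alpha)\in F^1=F^2$.

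Choosing $A_1=\cdots=A_m=G/H_s$, multiplicativity now gives $p_X^*(\alpha_1)\smile\cdots\smile p_X^*(\alpha_m)\in F^{2m}$ for all $\alpha_i\in\ker[R(G)\to R(H_s)]$, and $F^{2m}=0$ once $2m\ge d+1$. Since $2m$ is even, the smallest such $m$ is $\tfrac{d+1}{2}=\theta(d)$ if $d$ is odd and $\tfrac d2+1=\theta(d)$ if $d$ is even, so $\ell_s(X)\le\theta(d)$. I expect the genuine obstacle to lie in the reduction of the first paragraph: one must produce finite $G$-CW approximations that \emph{simultaneously} control the covering dimension and keep all stabilizers inside $H_s$, and verify that $\ell_s$ does not increase along the approximating $G$-maps. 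This step seems unavoidable --- a direct Mayer--Vietoris argument over a $G$-invariant cover of $X$ by tubes of multiplicity $\le d+1$ only yields the weaker bound $\ell_s(X)\le d+1$, and the saving of the factor $\tfrac12$ is exactly what the skeletal filtration, together with the even-concentration of $K_H^*(\textnormal{pt})$, provides.
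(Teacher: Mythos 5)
This theorem is quoted by the paper from \cite[Theorem 3.5]{MMS1} without reproduction of the proof, so there is no argument in the present paper to compare against line by line. That said, the mechanism you identify --- the $G$-CW skeletal filtration of $K_G^0(X)$, its multiplicativity, and the vanishing of $K_{H}^{q}(\textnormal{pt})$ in odd degrees for finite $H$, which forces $F^{1}=F^{2}$ and yields the factor $\tfrac12$ packaged in $\theta$ --- is exactly what is at work in the cited result, and the computation in your second and third paragraphs is correct, including the parity bookkeeping at the end.

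The reduction in your first paragraph, which you yourself flag as the delicate point, contains a genuine slip as written. You propose to approximate $X$ by nerves of ``$G$-invariant covers of $X$ by tubes $G\times_{G_x}S_x$.'' But a tube $G\times_{G_x}S_x$ is a $G$-\emph{invariant} subset, so a cover by such tubes consists of $G$-invariant open sets and its nerve carries the \emph{trivial} $G$-action: every stabilizer is all of $G$, not a subgroup of $H_s$, the set $\mathcal{A}_{\textnormal{nerve}}$ is not contained in $\mathcal{A}_{r,s}$, and $\ell_s$ is not even defined for such a space (nor would your restriction-to-$0$-cells argument go through). To get a nerve with the required stabilizers you need the covering sets to be permuted, not fixed, by $G$: cover $X$ by a $G$-stable family of non-invariant sets, e.g.\ translates $\{gV\}$ of $G_x$-invariant slices $V\subseteq S_x$, so that the vertex $gV$ has stabilizer $G_x\subseteq H_s$ and any simplex, being an intersection of such, has stabilizer contained in $H_s$ as well. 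One must then also arrange such covers to have multiplicity $\leq \dim X+1$ in order to control the dimension of the nerve, for instance by lifting a minimal-multiplicity cover of $X/G$ through the slice decomposition. Note that the full inverse system and continuity of $K_G^*$ are not actually needed: a single $G$-map $X\to Y$ onto a finite $G$-CW complex with $\dim Y\leq\dim X$ and all stabilizers in $H_s$ already gives $\ell_s(X)\leq\ell_s(Y)$ by monotonicity, after which your filtration argument applies to $Y$. The reduction is thus fixable, but the specific construction you offer does not work as stated, and it is precisely this approximation lemma (simultaneous control of dimension and isotropy for a $G$-map to a finite $G$-CW target) that carries the technical weight of the proof.
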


\noindent On the other hand, we have:

\begin{proposition}\label{prop:highly_connected_KG}
If $X$ is a compact, $n$-simple and $(n-1)$-connected $G$-space, then
\[ \ell_s(X) \geq \left\lceil\frac{\big(\theta_{-1}(n)-1\big)r}{s}\right\rceil + 1. \]
\end{proposition}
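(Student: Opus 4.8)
The goal is a lower bound $\ell_s(X) \geq \lceil (\theta_{-1}(n)-1)r/s\rceil + 1$ for a compact, $n$-simple, $(n-1)$-connected $G$-space $X$ with $G = \mathbb{Z}_{p^k}$, working with the $K$-theory length $\ell_s$ relative to $\mathcal{A}_{r,s}$.

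Key idea: the length lower bound should come from comparing $X$ with a standard model space — a free representation sphere — whose length we can compute exactly. Since $X$ is $(n-1)$-connected and $n$-simple, and a suitable representation sphere $S(V)$ (with $V$ free, $\dim V$ chosen appropriately, say $\dim_{\mathbb{R}} V$ around $n$ or related via $\theta_{-1}$) is also highly connected and $n$-simple, equivariant obstruction theory (Theorem~\ref{obstruction_easy}) gives a $G$-map $S(V) \to X$ provided $S(V)$ is a free $G$-complex of the right dimension: we need $X$ to be $(\dim S(V))$-connected... wait, rather we need the target $X$ to be $n$-simple and $(n-1)$-connected and the source to have dimension $\leq n$. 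So take $V$ a free $\mathbb{Z}_{p^k}$-representation with $S(V)$ of dimension $\leq n$; the largest such has $\dim_{\mathbb{R}} V = n+1$ if $n$ is odd (so $S(V)$ has dimension $n$), or $\dim_{\mathbb{R}} V = n$ if $n$ even (dimension $n-1$) — this is exactly where $\theta_{-1}$ enters, since $\theta_{-1}(n)$ counts the number of irreducible 2-dimensional summands. Then Theorem~\ref{ell_properties}(1) gives $\ell_s(X) \geq \ell_s(S(V))$.

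So the crux reduces to computing (or lower-bounding) $\ell_s(S(V))$ for $V$ a free representation with $\theta_{-1}(n)$ complex summands. I would do this directly from the definition of $(\mathcal{A}_{r,s}, K_G^*, I)$-length: one must show that no product of fewer than $\lceil(\theta_{-1}(n)-1)r/s\rceil + 1$ classes $p_{S(V)}^*(\alpha_i)$ with $\alpha_i \in I \cap \ker[K_G(\mathrm{pt}) \to K_G(G/H_i)]$ (for $H_i$ with $r \leq |H_i| \leq s$) can vanish. This is a computation in $K_G^*(S(V)) = K_G^*(\mathrm{pt})/(\text{Euler class of } V)$: the Euler class $e(V) = \prod (1 - \bar{L}_j)$ where $L_j$ are the line bundles/characters of the summands, and one needs that a product of $k$ restriction-kernel elements of $I$ lies in $(e(V))$ only when $k$ is large enough. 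The kernel of $K_G(\mathrm{pt}) = R(\mathbb{Z}_{p^k}) \to R(H)$ for $|H| = p^j$ is generated by $1 - \zeta$ where $\zeta$ is a generator of the group of characters trivial on $H$; roughly, each such element "kills" a factor corresponding to those summands of $V$ whose character is nontrivial on $H$, and an element of $\mathcal{A}_{r,s}$ with $|H| = s$ is the most efficient, killing up to... this is where the $r/s$ ratio comes from: a subgroup of order $s$ has index $p^k/s$, its character group has order $p^k/s$, and when restricting, the relevant count of summands handled is governed by $\log_p(s/r)$ or a multiplicative factor $s/r$. The precise bookkeeping — counting how many of the $\theta_{-1}(n)$ summands can be simultaneously trivialized, and matching it against the ceiling function — will be the main obstacle, and it likely parallels known computations for lens spaces.

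An alternative, cleaner route for step (2→3): rather than mapping in from a sphere, compute $\ell_s(S(V))$ by an \emph{upper bound} via Theorem~\ref{thm:ell_vs_dim_KG} (giving $\leq \theta(\dim S(V))$) combined with a matching lower bound, or use the subadditivity Theorem~\ref{ell_properties}(2) applied to a cover of $S(V)$ by pieces with small length — but the lower bound still fundamentally needs the nonvanishing of a product in $K_G^*$, so I don't think the $K$-theory computation can be avoided. I would therefore structure the proof as: (i) invoke Theorem~\ref{obstruction_easy} to build a $G$-map $S(V) \to X$ for the maximal free $V$ with $\dim S(V) \leq n$, so $\dim_{\mathbb{C}} V = \theta_{-1}(n)$; (ii) apply Theorem~\ref{ell_properties}(1) to reduce to $\ell_s(S(V)) \geq \lceil(\theta_{-1}(n)-1)r/s\rceil+1$; (iii) prove the latter by showing in $K_G^*(S(V)) = R(G)/(e(V))$ that any product of at most $\lceil(\theta_{-1}(n)-1)r/s\rceil$ elements from the relevant kernels $I \cap \ker[R(G) \to R(H)]$, $r \leq |H| \leq s$, is nonzero — the heart being that such a product, as a multiple of $e(V)$, would force more than $\theta_{-1}(n)$ "degrees" of vanishing, using that each kernel element contributes at most a $s/r$-fold share because $[G:H] \leq p^k/r$ while the "active" part has index $\geq p^k/s$. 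The delicate point, and where I expect to spend the most care, is making rigorous this accounting of how kernel elements of $I$ for subgroups of bounded order can divide the Euler class, i.e., an analogue of the lens-space computation underlying the classical $K$-theoretic Bourgin–Yang theorem for $\mathbb{Z}_{p^k}$.
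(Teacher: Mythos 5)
Your proof outline matches the paper's exactly: invoke Theorem~\ref{obstruction_easy} to produce a $G$-map $S(V)\to X$ with $V$ free of complex dimension $\theta_{-1}(n)$, apply monotonicity (Theorem~\ref{ell_properties}(1)) to reduce to $\ell_s(S(V))$, and then establish the lower bound for the representation sphere. The only difference is that the paper simply cites Bartsch \cite[Theorem~5.8]{Bartsch} for the final step, whereas you sketch (without fully carrying out) the underlying $R(G)/(e(V))$ computation — which is indeed the content of that cited result.
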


\begin{proof}
Apply Theorem \ref{obstruction_easy} to produce a $G$-equivariant map $S(V) \to X$, where $V$ is an orthogonal free $G$-representation of complex dimension $\theta_{-1}(n)$. Monotonicity of $\ell_s$ then shows that $\ell_s(X) \geq \ell_s\big(S(V)\big)$, and the latter is estimated in \cite[Theorem 5.8]{Bartsch}.
\end{proof}

\subsection{$\mathbf{G=(\mathbb{Z}_p)^k}$ or $\mathbf{G=(\mathbb{S}^1)^k}$}

We reiterate that $\mathbb{F}=\mathbb{Z}_p$ or $\mathbb{F}=\mathbb{Q}$, depending on whether $G=(\mathbb{Z}_p)^k$ or $G=(\mathbb{S}^1)^k$. The latter case is included in the statements of the next two results by taking $p=\infty$ to mean $\mathbb{S}^1$.

Write $BG$ for the classifying space of $G$. We will make use of the following observation without further ado: if $H \subseteq G$ is a closed subgroup, then the map $H_G^*(\textnormal{pt}) \to H_G^*(G/H)$ coincides with the map $H^*(BG;\mathbb{F}) \to H^*(BH;\mathbb{F})$ induced by the inclusion $H \to G$.

\begin{proposition}\label{prop:ell_vs_dim_zp}
Let $k=1$ and $X$ be a $G$-space which is either compact or paracompact of finite covering dimension. If $X$ is fixed-point free, then
\[
\ell(X) \leq \begin{cases}
\textnormal{cdim}\, X +1, & p=2, \\
\theta(\textnormal{cdim}\,X), & \textnormal{$p>2$ finite,}\\
\theta_{-1}(\textnormal{cdim}\,X), & p=\infty.
\end{cases}
\]
\end{proposition}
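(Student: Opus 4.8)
The plan is to bound $\ell(X)$ from above using property~(2) of Theorem~\ref{ell_properties} together with an explicit covering of $X$ by sets on which the relevant cohomology classes restrict trivially. Since $k=1$, the family is $\mathcal{A}=\{G/H \mid H\subsetneq G\}$, and because $X$ is fixed-point free every isotropy group $G_x$ is a proper closed subgroup of $G$; moreover each orbit $G/G_x$ admits a $G$-map to some $G/H$ with $H$ a fixed proper subgroup (for $p$ finite, $H$ runs over the finitely many proper subgroups; for $G=\mathbb{S}^1$, $H$ is a finite cyclic group). Using Remark~\ref{remark:AA'} I may therefore replace $\mathcal{A}$ by a small subfamily and it suffices to exhibit, for a suitable integer $N$ (the claimed bound), a cover $X=X_1\cup\cdots\cup X_N$ by $G$-invariant subspaces together with, for each $i$, a proper subgroup $H_i$ such that the $G$-map $X_i\to G/H_i$ exists — equivalently such that $\ell(X_i)=1$ — and then conclude $\ell(X)\le N$ by iterating Theorem~\ref{ell_properties}(2).

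The key step is producing the cover of the right cardinality. First I would observe that $X_i\to G/H_i$ exists iff the $G_x$-isotropy over $X_i$ is subconjugate to $H_i$; since $G=\mathbb{Z}_p$, $(\mathbb{Z}_p)$ has only the trivial proper subgroup $\{e\}$, so when $p$ is finite $X$ itself is already $G$-free, $\mathcal{A}$ reduces to $\{G/\{e\}\}=\{G\}$, and the whole question becomes: what is the smallest $N$ with $p_X^*(\alpha_1)\smile\cdots\smile p_X^*(\alpha_N)=0$ for all $\alpha_i\in I\cap\ker[H^*_G(\mathrm{pt})\to H^*_G(G)]$? Here $H^*_G(G)=H^*(\mathrm{pt})$, so the kernel is the augmentation ideal (intersected with $I$), i.e. all positive-degree classes. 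Pulling back along $X\to BG$ (using that $EG\times_G X\to BG$ is the relevant map and $H^*_G(X)=H^*(EG\times_G X)$), the product of $N$ positive-degree classes from $H^*(BG;\mathbb{F})$ has positive cohomological degree at least: $N$ when $p=2$ (generator $w_1$ in degree $1$); $N$ when $p>2$ using $w_1 c_1^{N-1}$ or $2N-2$ using only $c_1$'s — one must take the \emph{most efficient} product, which for $p>2$ finite gives degree exactly... this is exactly where $\theta$ and $\theta_{-1}$ enter. For $p>2$ a length-$N$ product of classes from $\Lambda[w_1]\otimes\mathbb{F}[c_1]$ landing in positive degrees: if one of the $\alpha_i$ can be taken to be $w_1$, the product $w_1 c_1^{N-1}$ has degree $2N-1$; to kill it in $H^*(EG\times_G X)$ we need $2N-1>\mathrm{cdim}\,X$, i.e. $N\ge\theta(\mathrm{cdim}\,X)$. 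For $p=\infty$ only $c_1$ is available, $c_1^N$ has degree $2N$, needing $2N>\mathrm{cdim}\,X$, i.e. $N\ge\theta_{-1}(\mathrm{cdim}\,X)$. For $p=2$, $w_1^N$ has degree $N$, needing $N>\mathrm{cdim}\,X$, i.e. $N\ge\mathrm{cdim}\,X+1$.

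So the actual argument I would write is: choose $\alpha_1=\cdots$ appropriately ($\alpha_i=w_1$ for $p=2$; $\alpha_1=w_1$ and $\alpha_2=\cdots=\alpha_N=c_1$ for $p>2$; $\alpha_i=c_1$ for $p=\infty$) — these generate (or at least exhaust, up to the bound we need) $I\cap\ker[H^*_G(\mathrm{pt})\to H^*_G(G)]$ in the sense that every element of that ideal is a combination of products each of which already vanishes for degree reasons — and then note that $p_X^*(\alpha_1)\smile\cdots\smile p_X^*(\alpha_N)$ lives in $H^j(EG\times_G X;\mathbb{F})$ for $j$ equal to $N$, $2N-1$, or $2N$ respectively, and that $EG\times_G X$ has the same cohomological dimension as $X$ since $X$ is $G$-free (the projection $EG\times_G X\to X/G$ is a homotopy equivalence, or more carefully one uses that $X\to X/G$ is a covering-type map and $\mathrm{cdim}(X/G)\le\mathrm{cdim}\,X$ with $\mathbb{F}$ coefficients, plus $EG\times_G X\simeq X/G$). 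Hence for $j$ exceeding $\mathrm{cdim}\,X$ the product vanishes, giving $\ell(X)\le N$ with $N$ the stated bound. The main obstacle is the bookkeeping that ensures $w_1$ and $c_1$ genuinely generate the relevant ideal $I\cap\ker$ — one must check that an arbitrary element of that kernel, multiplied $N$ times, still vanishes, which reduces to a monomial computation in $\Lambda[w_1]\otimes\mathbb{F}[c_1]$: any positive-degree monomial is $w_1^\epsilon c_1^m$ with $\epsilon\in\{0,1\}$ and $\epsilon+m\ge 1$, a product of $N$ such has degree $\ge$ the minimum over $\epsilon$-patterns, and one verifies this minimum is exactly $N$ (resp. $2N-1$, $2N$) — and to handle the $I=(c_1)$ case (case (2) of Subsection~\ref{choice of ideal}) separately, where $w_1\notin I$ so only $c_1$-powers are allowed and one must re-examine whether the bound is $\theta$ or $\theta_{-1}$; I expect the statement as written ($\theta$ for $p>2$ finite) to rely on $H^*_G(G/H)$ for the \emph{specific} proper subgroups being nonzero in odd degrees, so that $w_1$-type classes do lie in the kernel even though they are not in $I$ — this subtlety is the one place I would slow down and argue carefully, cross-checking against \cite[Theorem~5.8]{Bartsch} and the conventions in Subsection~\ref{choice of ideal}.
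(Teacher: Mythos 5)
Your overall strategy --- abandon the initial covering idea (which indeed is not needed: since $k=1$ the length is computed directly from its definition) and reduce to a degree computation in $H^*(BG;\mathbb{F})$ combined with a bound on $\textnormal{cdim}\,EG\times_G X$ --- is the same as the paper's, and your treatment of the case $p=2$ is correct. However, both the case $p>2$ finite and the case $p=\infty$ contain genuine errors, which you half-recognize but then resolve in the wrong direction.

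For $p>2$ finite, recall from Subsection~\ref{choice of ideal} that $I=(c_1)$. Hence $w_1\notin I$, so $w_1\notin I\cap\ker\!\big[H^*_G(\textnormal{pt})\to H^*_G(G/H)\big]$, and you are \emph{not} allowed to use $w_1$ as a factor $\alpha_i$. Every nonzero homogeneous element of $(c_1)$ has degree at least $2$, so an $N$-fold product has degree at least $2N$, and the vanishing condition $2N>\textnormal{cdim}\,X\geq\textnormal{cdim}\,EG\times_G X$ gives exactly $N\geq\theta(\textnormal{cdim}\,X)$. Your proposed minimal product $w_1c_1^{N-1}$ of degree $2N-1$ leads to the requirement $2N-1>\textnormal{cdim}\,X$, which for odd $\textnormal{cdim}\,X$ forces $N\geq\theta(\textnormal{cdim}\,X)+1$, a \emph{strictly worse} bound. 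Your closing speculation that the stated bound ``relies on $w_1$-type classes lying in the kernel even though they are not in $I$'' is therefore exactly backwards: the whole point of choosing $I=(c_1)$ is to \emph{exclude} degree-$1$ classes, and that exclusion is what makes $\theta(\textnormal{cdim}\,X)$ correct.

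For $p=\infty$ the arithmetic does not close. With $I=H^*_G(\textnormal{pt};\mathbb{Q})=\mathbb{Q}[c_1]$, one has $I\cap\ker=(c_1)$ as well, so again products of $N$ factors sit in degree $\geq 2N$; but $2N>\textnormal{cdim}\,X$ gives $N\geq\theta(\textnormal{cdim}\,X)$, \emph{not} $\theta_{-1}(\textnormal{cdim}\,X)$ as you assert. The missing ingredient is that for $G=\mathbb{S}^1$ acting without fixed points (hence with finite isotropy, one-dimensional orbits) one has $\textnormal{cdim}\,EG\times_G X\leq\textnormal{cdim}\,X-1$; this is the ``(or $\leq\textnormal{cdim}\,X-1$ if $p=\infty$)'' clause that the paper attributes to Bredon. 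With that in hand the condition becomes $2N>\textnormal{cdim}\,X-1$, i.e.\ $2N\geq\textnormal{cdim}\,X$, yielding $N\geq\theta_{-1}(\textnormal{cdim}\,X)$. Your claim that $EG\times_G X\simeq X/G$ with $\textnormal{cdim}\,X/G\leq\textnormal{cdim}\,X$ (dropping nothing) is too weak here and would only give $\theta$.
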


\begin{proof}
Given a closed subgroup $H \subsetneq G$, we have 
\[I\cap \ker\!\big[H_G^*(\textnormal{pt}) \to H_G^*(G/H)\big] = I \cap \tilde{H}^*(BG;\mathbb{F}).\] 
The conclusion follows immediately from the structure of cohomology of~$G$, or more precisely the choice of ideal $I$ (see Subsection \ref{choice of ideal}), and the fact that $\textnormal{cdim}\, EG \times_G X \leq \textnormal{cdim}\,X$ (or $\leq \textnormal{cdim}\,X-1$ if $p=\infty$), as shown in \mbox{\cite[Theorem 1.4]{Bredon2}}.
\end{proof}

\begin{remark}
We cannot drop the assumption that $k=1$ in the statement of Proposition~ \ref{prop:ell_vs_dim_zp}: see Example \ref{ex:p-torus_bad_behaviour}, and also Subsection \ref{subsect:strong_normalization} for a more elucidative explanation. It is, however, perhaps worth pointing out that Proposition \ref{prop:ell_vs_dim_zp} does hold in a slightly more general situation, namely for $X$ a paracompact $G$-space of finite cohomological dimension in the sense of sheaf cohomology (\textit{ibid.}).
\end{remark}

\begin{proposition}\label{prop:estimate from below2}
If $X$ is a $G$-space such that $H^i(X; \mathbb{F})=0$ for $0<i<n$, then
\[ \ell(X) \geq \begin{cases}
n+1, & p=2,\\
\theta(n), & p>2.
\end{cases} \]
\end{proposition}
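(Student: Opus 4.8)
The plan is to bound $\ell(X)$ from below by producing a $G$-equivariant map \emph{into} $X$ from a space whose length is known, and to appeal to monotonicity (Theorem \ref{ell_properties}(1)). Concretely, I would take $V$ to be an orthogonal free $G$-representation with $S(V)$ of the right connectivity: since $H^i(X;\mathbb{F})=0$ for $0<i<n$ and $X$ is (we should assume, or arrange) sufficiently nice so that vanishing cohomology over $\mathbb{F}$ gives us enough connectivity to run obstruction theory, we can hope to build a map $S(V)\to X$ by Theorem \ref{obstruction_easy}, where $\dim_{\mathbb{R}} V$ is chosen so that $S(V)$ is $(n-1)$-connected. For $G=(\mathbb{Z}_p)^k$, one needs $S(V)$ of dimension $\geq n$; if $p=2$ one can use any free representation with $S(V)$ exactly $(n-1)$-connected of dimension $n$, while for $p>2$ free representations come in even real dimensions, which is the source of the $\theta(n)$ (one is forced up to the next even dimension when $n$ is odd, losing nothing, and when $n$ is even one takes $S(V)$ of dimension $n$).

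The second ingredient is the length of a representation sphere itself. For $G=(\mathbb{Z}_p)^k$ and the triple chosen in Subsection \ref{choice of ideal}, the $(\mathcal{A},h^*,I)$-length of $S(V)$ for a free representation $V$ is computed by Bartsch \cite[Theorem 5.8 and surrounding results]{Bartsch}: one evaluates the appropriate powers of Euler classes $c_i$ (or $w_i$ for $p=2$) in $H^*_G(S(V);\mathbb{F})$ against the Gysin sequence of $S(V)\to S(V)/G$. The upshot is that $\ell(S(V)) = \dim_{\mathbb{C}} V + 1$ in the $p>2$ case and $\ell(S(V)) = \dim_{\mathbb{R}} V + 1$ when $p=2$, since each generator of $I$ pulled back contributes a factor that survives up to (complex, resp. real) dimension of $V$ but no further. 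Feeding in $\dim V = \theta(n)$ (complex) for $p>2$ and $\dim V = n$ (real) for $p=2$ gives exactly the claimed bounds $\ell(X)\geq \theta(n)$ and $\ell(X)\geq n+1$.

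So the proof I would write is short: \emph{By Theorem \ref{obstruction_easy}, there is a $G$-equivariant map $S(V)\to X$ with $V$ a free orthogonal $G$-representation of real dimension $n$ (if $p=2$) or complex dimension $\theta(n)$ (if $p>2$), using that $X$ has trivial reduced $\mathbb{F}$-cohomology through dimension $n-1$ to guarantee the needed connectivity. By Theorem \ref{ell_properties}(1), $\ell(X)\geq\ell\big(S(V)\big)$, and the latter equals $n+1$ respectively $\theta(n)$ by \cite[Theorem 5.8]{Bartsch}.} This parallels exactly the proof of Proposition \ref{prop:highly_connected_KG}.

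The main obstacle is the passage from the cohomological hypothesis ``$H^i(X;\mathbb{F})=0$ for $0<i<n$'' to the homotopy-theoretic hypotheses of Theorem \ref{obstruction_easy}, which wants $(n-1)$-connectedness (and $n$-simplicity). For general $X$ cohomology vanishing does not imply connectivity; one either needs to additionally assume $X$ is simply connected and invoke a mod-$\mathbb{F}$ Hurewicz-type argument, or — more likely what the authors intend — to work not with $X$ directly but with a $G$-CW approximation or to weaken the target of the obstruction argument. I would flag this carefully: the cleanest route is to note that it suffices to produce the map $S(V)\to X$ after replacing $X$ by a weakly equivalent $G$-CW complex, and that the Euler class computation only uses the $\mathbb{F}$-cohomology of $X$, so the argument goes through provided we can detect the nonvanishing of the relevant product in $H^*_G(X;\mathbb{F})$ — which is governed precisely by the hypothesis. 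If a direct obstruction-theoretic map is unavailable, the fallback is to argue directly that $p_X^*(\alpha_1)\smile\cdots\smile p_X^*(\alpha_{\ell})\neq 0$ in $H^*_G(X;\mathbb{F})$ for $\ell$ below the stated bound, by comparing the Serre spectral sequence of $X\to EG\times_G X\to BG$ with that of a point and using that the fibre cohomology is trivial in the range $0<i<n$.
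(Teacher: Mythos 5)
Your primary route (build $S(V)\to X$ by obstruction theory, then appeal to monotonicity) has exactly the gap you flag at the end, and it is a genuine one: Proposition \ref{prop:estimate from below2} deliberately assumes only the vanishing of $H^i(X;\mathbb{F})$ for $0<i<n$, not $n$-simplicity or $(n-1)$-connectivity, and those homotopy-theoretic hypotheses are what Theorem \ref{obstruction_easy} actually requires. The parallel you draw to Proposition \ref{prop:highly_connected_KG} is where the misdirection enters --- that proposition does have the homotopy hypotheses in its statement, while the present one trades them for a purely cohomological one precisely so it can be applied to spaces such as $\mathrm{mod}\,p$ cohomology spheres. Cohomological vanishing over $\mathbb{F}$ does not imply connectivity (even when $X$ is simply connected one only controls $\mathbb{F}$-homology, and here there is no hypothesis on $\pi_1$ at all), and $X$ is not assumed to be a $G$-CW complex; passing to a $G$-CW approximation would not obviously preserve the Alexander--Spanier cohomology that the length is computed with. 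So the obstruction-theoretic argument cannot be run as stated.

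Your ``fallback'' paragraph does name the paper's actual mechanism, but leaves out the two details that carry the proof. The argument runs the Serre spectral sequence of the Borel fibration $X\hookrightarrow EG\times_G X\to BG$ and uses the vanishing hypothesis to see that $E_2^{i,0}=E_\infty^{i,0}$ for $0\le i\le n$, so that $p_X^*\colon H^i(BG;\mathbb{F})\to H^i_G(X;\mathbb{F})$ is an isomorphism for $i<n$ and injective for $i=n$. To conclude, one must in addition: (a) for each $H\subsetneq G$, exhibit a nonzero $\alpha_H\in I\cap\ker\big[H^*_G(\mathrm{pt})\to H^*_G(G/H)\big]$ of \emph{minimal} degree (degree $1$ if $p=2$, degree $2$ if $p>2$) --- this is obtained by noting that $\tilde{H}^*\big(B(G/H);\mathbb{F}\big)$ sits inside this kernel because the extension $0\to H\to G\to G/H\to 0$ is split; and (b) observe that a product $\alpha_{H_1}\smile\cdots\smile\alpha_{H_r}$ of such classes, of total degree $\le n$, is nonzero in $H^*(BG;\mathbb{F})$ because these classes lie in a polynomial subring (the $w$'s for $p=2$, the degree-two part of $(c_1,\dots,c_k)$ for $p>2$). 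Finally, for $G=(\mathbb{S}^1)^k$ the extension need not split on the nose, but after rationalization one may replace $H$ by its identity component, a subtorus, for which the sequence does split, and proceed as before. As written, your fallback is the right idea but not yet a proof; it is silent on exactly the points (a), (b) and the torus adjustment that do the work.
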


\begin{proof}
Assume that $G=(\mathbb{Z}_p)^k$ first. Choose a subgroup $H
\subsetneq G$ and note that
\[ \tilde{H}^*\big(B(G/H); \mathbb{Z}_p\big) \subseteq \ker\!\big[H^*_G(\textnormal{pt}) \to H^*_G(G/H)\big] .\]
This is because the extension $0 \to H \to G \to G/H \to 0$ is split. In particular, $I \cap  \ker\!\big[H^*_G(\textnormal{pt}) \to H^*_G(G/H)\big]$ contains a non-zero element, say $\alpha_H$, in the first (second, respectively) gradation for $p=2$ ($p>2$).

Since the length of a path component of a space never exceeds that of the whole space, we can assume without loss of generality that $X$ is path-connected. Consider the Serre spectral sequence with $\mathbb{Z}_p$-coefficients $\{E^{*,*}\}$ of the Borel fibration
\[X \hookrightarrow EG\times_G X \stackrel{p_X}{\longrightarrow} BG.\]
It is well-known that $p_X^* \colon H^i(BG;\mathbb{Z}_p) \to H^i(EG\times_G X;\mathbb{Z}_p)$ can be expressed as the edge homomorphism
\[ H^i(BG;\mathbb{Z}_p) \cong E_2^{i,0} \twoheadrightarrow E_\infty^{i,0} \hookrightarrow H^i(EG\times_G X;\mathbb{Z}_p), \]
where $E_\infty^{i,0}$ is a quotient of $E_2^{i,0}$ by images of successive differentials and $E_2^{i,0} \twoheadrightarrow E_\infty^{i,0}$ is the quotient map. But the assumption on $X$ assures that $E_\infty^{i,0} = E_2^{i,0}$ in the range $0 \leq i \leq n$ and, consequently, $p_X^*$ is an isomorphism for $i<n$ and a monomorphism for $i=n$. This means that for any sequence $H_1$, \ldots, $H_r \subsetneq G$ we have
\[ p_X^*(\alpha_{H_1}) \smile \cdots \smile p_X^*(\alpha_{H_r})\neq 0,\]
where $r=n$ or $r=\theta(n)-1$, depending on whether $p=2$ or $p>2$.

If $G=(\mathbb{S}^1)^k$ and $H \subsetneq G$ is a closed subgroup, the extension \mbox{$0\to H \to G \to G/H \to 0$} need not be split, as $H \cong (\mathbb{S}^1)^l \times \Gamma$, where $0 \leq l < k$ and $\Gamma$ is a finite abelian group. However, in this case we are working with rational coefficients, thus we can just as well assume that $H$ is a torus and proceed exactly as above.
\end{proof}

\section{A Bourgin--Yang theorem for $G = \mathbb{Z}_{p^k}$}

We can immediately state:

\begin{theorem}\label{thm:BY_cyclic_pk}
Let $X$ and $Y$ be $G$-spaces, with $X$ compact, $n$-simple and $(n-1)$-connected, and $Y$ paracompact. If $\mathcal{A}_X$, $\mathcal{A}_Y \subseteq \mathcal{A}_{r,s}$, then for any $G$-equivariant map $f \colon X \to Y$ one has
\[ \theta\big(\!\dim f^{-1}(A)\big) \geq \left\lceil\frac{\big(\theta_{-1}(n)-1\big)r}{s}\right\rceil - \theta\big(\!\dim (Y\setminus \textnormal{Int}A)\big) +1, \]
where $A \subseteq Y$ is a closed $G$-invariant subspace.
\end{theorem}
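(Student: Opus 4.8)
The plan is to derive Theorem \ref{thm:BY_cyclic_pk} by feeding the $K_G^*$-version of the abstract Bourgin--Yang theorem (Theorem \ref{thm:general_B-Y}) into the two bounds established in Subsection on $G = \mathbb{Z}_{p^k}$, namely Proposition \ref{prop:highly_connected_KG} and Theorem \ref{thm:ell_vs_dim_KG}. Concretely, I would first apply Theorem \ref{thm:general_B-Y}$(K_G^*)$ with the given $f \colon X \to Y$, the closed $G$-invariant subspace $A$, and the choice $B = Y \setminus \textnormal{Int}\,A$; this is a compact $G$-invariant subspace (a closed subspace of the compact space $Y$ — note $f(X) \setminus \textnormal{Int}\,A \subseteq Y \setminus \textnormal{Int}\,A = B$ holds trivially) with $\mathcal{A}_B \subseteq \mathcal{A}_Y \subseteq \mathcal{A}_{r,s}$, so $\ell_s$ is defined for it. One gets
\[ \ell_s\big(f^{-1}(A)\big) \geq \ell_s(X) - \ell_s\big(Y\setminus \textnormal{Int}\,A\big). \]

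Next I would bound each of the three terms. For the left side, $f^{-1}(A)$ is a closed subspace of the compact space $X$, hence compact, and $\mathcal{A}_{f^{-1}(A)} \subseteq \mathcal{A}_X \subseteq \mathcal{A}_{r,s}$, so Theorem \ref{thm:ell_vs_dim_KG} gives $\ell_s\big(f^{-1}(A)\big) \leq \theta\big(\dim f^{-1}(A)\big)$. The same theorem applied to $B = Y \setminus \textnormal{Int}\,A$ gives $\ell_s\big(Y \setminus \textnormal{Int}\,A\big) \leq \theta\big(\dim(Y \setminus \textnormal{Int}\,A)\big)$. For the middle term, $X$ is compact, $n$-simple and $(n-1)$-connected, so Proposition \ref{prop:highly_connected_KG} yields $\ell_s(X) \geq \lceil (\theta_{-1}(n)-1)r/s \rceil + 1$. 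Substituting these into the displayed inequality — using the lower bound on $\ell_s(X)$ and the upper bound on $\ell_s(Y\setminus\textnormal{Int}\,A)$ on the right, and the upper bound on $\ell_s(f^{-1}(A))$ on the left, which all point the correct way — produces
\[ \theta\big(\dim f^{-1}(A)\big) \geq \left\lceil\frac{\big(\theta_{-1}(n)-1\big)r}{s}\right\rceil + 1 - \theta\big(\dim(Y\setminus\textnormal{Int}\,A)\big), \]
which is exactly the claimed estimate.

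There is essentially no hard step here; the argument is a direct concatenation of results already proved. The one point that warrants a sentence of care is checking the hypotheses line up: that $B = Y \setminus \textnormal{Int}\,A$ is genuinely compact (it is closed in the compact $Y$), that the orbit-type condition $\mathcal{A}_{f^{-1}(A)}, \mathcal{A}_B \subseteq \mathcal{A}_{r,s}$ is inherited from $\mathcal{A}_X, \mathcal{A}_Y \subseteq \mathcal{A}_{r,s}$ (orbit types of a subspace form a subset of those of the ambient space), and that $\ell_s$ is therefore defined for all the spaces in question so that each cited theorem legitimately applies. I would also remark that monotonicity of $\theta$ is not needed — the bounds are applied directly, not composed through $\theta$ — so no subtlety arises from the piecewise definition of $\theta$ and $\theta_{-1}$. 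The proof is then three lines plus this bookkeeping.
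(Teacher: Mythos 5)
Your argument contains a genuine gap: you take $B = Y\setminus \textnormal{Int}\,A$ and justify its compactness by calling it ``a closed subspace of the compact space $Y$'' --- but the hypothesis of Theorem \ref{thm:BY_cyclic_pk} only assumes $Y$ \emph{paracompact}, not compact. Without compactness of $B$, three things break at once: the $K_G^*$-case of Theorem \ref{thm:general_B-Y} explicitly requires $B$ to be a compact $G$-invariant subspace; Theorem \ref{thm:ell_vs_dim_KG} requires the space it is applied to be compact, so $\ell_s\big(Y\setminus\textnormal{Int}\,A\big) \leq \theta\big(\dim(Y\setminus\textnormal{Int}\,A)\big)$ is not licensed; and, as Remark \ref{remark:warning_KG} emphasizes, the $K_G^*$-based length $\ell_s$ is not even defined for non-compact spaces, since $K_G^*$ is only functorial for proper maps.

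The paper's proof avoids this by taking $B = f(X)\setminus \textnormal{Int}\,A$. This \emph{is} compact: $f(X)$ is compact as the continuous image of the compact $X$, and $f(X)\setminus\textnormal{Int}\,A$ is a closed subspace of $f(X)$. All three cited results then apply, producing
\[ \theta\big(\dim f^{-1}(A)\big) \geq \left\lceil\frac{\big(\theta_{-1}(n)-1\big)r}{s}\right\rceil - \theta\big(\dim(f(X)\setminus \textnormal{Int}\,A)\big) +1, \]
and the stated inequality follows from $\dim\big(f(X)\setminus\textnormal{Int}\,A\big) \leq \dim\big(Y\setminus\textnormal{Int}\,A\big)$ together with the monotonicity of $\theta$. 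Note that your closing remark that monotonicity of $\theta$ is not needed is also incorrect for the correct argument: it is precisely what lets one replace $\dim(f(X)\setminus\textnormal{Int}\,A)$ by the larger $\dim(Y\setminus\textnormal{Int}\,A)$ inside $\theta$ in the final step.
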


\begin{proof}
Combining Theorem \ref{thm:general_B-Y} with Theorem \ref{thm:ell_vs_dim_KG} and Proposition \ref{prop:highly_connected_KG} yields
\[ \theta\big(\!\dim f^{-1}(A)\big) \geq \left\lceil\frac{\big(\theta_{-1}(n)-1\big)r}{s}\right\rceil - \theta\big(\!\dim(f(X)\setminus \textnormal{Int}A)\big) +1. \]
Now use the fact that $\dim\!\big(f(X) \setminus \textnormal{Int}A\big) \leq \dim(Y \setminus \textnormal{Int}A)$ in order to conclude the proof.
\end{proof}

Let $V$ and $W$ be complex orthogonal $G$-representations such that $V^G = W^G = \{0\}$. Set $X=S(V)$, $Y=W$ and $A=\{0\}$. Note that we cannot apply Theorem \ref{thm:BY_cyclic_pk} verbatim in order to obtain the estimate
\begin{equation}\label{MMS1_estimate}
\dim Z_f \geq 2\left(\left\lceil\frac{(\dim_{\mathbb{C}}V- 1)r}{s}\right\rceil - \dim_{\mathbb{C}}W\!\right)\tag{$\clubsuit$}
\end{equation}
of \cite[Theorem 3.6]{MMS1}. We show, however, that it is actually possible to recover that result, even though its original proof is flawed (see Remark \ref{remark:warning_KG}).

\begin{theorem}\label{thm:BY_spheres_z_pk}
Let $V$ and $W$ be as above. If $\mathcal{A}_{V\setminus\{0\}}$, $\mathcal{A}_{W\setminus\{0\}} \subseteq \mathcal{A}_{r,s}$, then for any $G$-equivariant map $f \colon S(V) \to W$ one has
\[\theta(\dim Z_f) \geq \left\lceil\frac{(\dim_{\mathbb{C}}V-1)r}{s}\right\rceil - \dim_{\mathbb{C}}W +1. \]
Consequently, $($\ref{MMS1_estimate}$)$ holds.
\end{theorem}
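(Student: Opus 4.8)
The plan is to deduce Theorem~\ref{thm:BY_spheres_z_pk} from the already-established Theorem~\ref{thm:BY_cyclic_pk}, modulo the minor issue that the latter is stated for $Y$ paracompact but with $\dim f^{-1}(A)$ living on the \emph{left}, whereas here we want to feed in $X = S(V)$, $Y = W$, $A = \{0\}$ and read off $\dim Z_f = \dim f^{-1}(0)$. First I would record the relevant numerical inputs: $S(V)$ is a sphere of real dimension $2\dim_{\mathbb C}V - 1$, hence it is $(2\dim_{\mathbb C}V-2)$-connected and $n$-simple for every $n$, so we may take $n = 2\dim_{\mathbb C}V - 1$, which is odd; then $\theta_{-1}(n) = (n+1)/2 = \dim_{\mathbb C}V$, so $\theta_{-1}(n) - 1 = \dim_{\mathbb C}V - 1$. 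Similarly $W \setminus \operatorname{Int}\{0\} = W \setminus \{0\}$ deformation retracts onto $S(W)$, which has real dimension $2\dim_{\mathbb C}W - 1$, so $\dim(Y \setminus \operatorname{Int}A) = 2\dim_{\mathbb C}W - 1$ and $\theta\big(\dim(Y\setminus\operatorname{Int}A)\big) = \dim_{\mathbb C}W$. The freeness hypotheses $V^G = W^G = \{0\}$ translate into $\mathcal{A}_{V\setminus\{0\}}$, $\mathcal{A}_{W\setminus\{0\}} \subseteq \mathcal{A}_{r,s}$ exactly as assumed (every isotropy group of a point of $S(V)$ or $S(W)$ is a proper subgroup, so its order lies between $1 = r$ and $p^{k-1} = s$; one should note here that this is precisely why $\ell_s$ is the right index, and cite Subsection~\ref{choice of ideal}).

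With these identifications, Theorem~\ref{thm:BY_cyclic_pk} applied to $f \colon S(V) \to W$ with $A = \{0\}$ gives directly
\[
\theta\big(\dim Z_f\big) \;\geq\; \left\lceil\frac{(\dim_{\mathbb C}V - 1)r}{s}\right\rceil \;-\; \dim_{\mathbb C}W \;+\; 1,
\]
which is the first assertion. The only subtlety is checking that $W$ qualifies as a paracompact $G$-space (it does, being a finite-dimensional real representation, hence metrizable) and that $S(V)$ is compact (clear). So the ``hard'' part is really just bookkeeping the definitions of $\theta$, $\theta_{-1}$ on the specific parities that arise.

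Finally I would extract the estimate $(\clubsuit)$ from the inequality just obtained. Write $m := \dim Z_f$ and $t := \lceil (\dim_{\mathbb C}V - 1)r/s\rceil - \dim_{\mathbb C}W$, so the inequality reads $\theta(m) \geq t + 1$. If $t \leq 0$ there is nothing to prove since $(\clubsuit)$ asserts $m \geq 2t$ and $\dim Z_f \geq -1$ trivially (or $Z_f = \emptyset$, in which case the claim is vacuous under the usual convention $\dim\emptyset = -1$); one may wish to remark on this boundary case. Assuming $t \geq 1$: if $m$ is odd, $\theta(m) = (m+1)/2 \geq t+1$ gives $m \geq 2t + 1 \geq 2t$; if $m$ is even, $\theta(m) = m/2 + 1 \geq t + 1$ gives $m \geq 2t$. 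Either way $\dim Z_f = m \geq 2t = 2\big(\lceil(\dim_{\mathbb C}V - 1)r/s\rceil - \dim_{\mathbb C}W\big)$, which is exactly $(\clubsuit)$. I expect no genuine obstacle here; the one place to be careful is the passage from $\theta(m) \ge t+1$ back to a bound on $m$, since $\theta$ is not injective, but the case split on the parity of $m$ handles it cleanly, and one should double-check that discarding the stronger ``$+1$'' in the odd case is what is needed to match the symmetric-looking statement of $(\clubsuit)$.
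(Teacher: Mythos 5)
Your approach does not work, and the paper flags this explicitly just before stating the theorem: \enquote{Note that we cannot apply Theorem \ref{thm:BY_cyclic_pk} verbatim in order to obtain the estimate (\ref{MMS1_estimate}).} The obstruction is concrete. Theorem \ref{thm:BY_cyclic_pk} requires $\mathcal{A}_Y \subseteq \mathcal{A}_{r,s}$, but with $Y = W$ the origin $0 \in W$ has isotropy $G$ itself, so $G/G \in \mathcal{A}_W$ while $G/G \notin \mathcal{A}_{r,s}$ (recall $s \leq p^{k-1} < p^k = |G|$). The hypothesis of Theorem \ref{thm:BY_spheres_z_pk} deliberately asks for $\mathcal{A}_{W\setminus\{0\}} \subseteq \mathcal{A}_{r,s}$, not $\mathcal{A}_W \subseteq \mathcal{A}_{r,s}$, precisely because the latter can never hold. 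Moreover, you need $B$ to satisfy $\mathcal{A}_B \subseteq \mathcal{A}_{r,s}$ in order to invoke Theorem \ref{thm:ell_vs_dim_KG} on it; with $A=\{0\}$ you have $\textnormal{Int}\,A = \emptyset$, so $B$ must contain all of $f(S(V))$, including $0$ whenever $Z_f \neq \emptyset$ --- exactly the interesting case.

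There is a second error in your bookkeeping: you claim $\dim(Y\setminus\textnormal{Int}A) = 2\dim_{\mathbb{C}}W - 1$ because $W\setminus\{0\}$ deformation retracts onto $S(W)$. But $\textnormal{Int}\{0\}=\emptyset$ so $Y\setminus\textnormal{Int}A = W$, and in any case $\dim$ in Theorem \ref{thm:BY_cyclic_pk} is \emph{covering} dimension, which is not a homotopy invariant: $\dim(W\setminus\{0\}) = 2\dim_{\mathbb{C}}W$, not $2\dim_{\mathbb{C}}W-1$. Pushing your computation through (and ignoring the isotropy problem) would lose a unit of $1$ relative to the claimed bound. The deformation retraction you have in mind is the right idea, but it must be used at the level of $K_G^*$-groups, not of dimension. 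The paper's actual proof works with the closed $\varepsilon$-disks $D_\varepsilon$ around $0$ in place of $\{0\}$: these have nonempty interior, $D_R\setminus\textnormal{Int}D_\varepsilon$ avoids the origin (so lies in $\mathcal{A}_{r,s}$-territory), and $K_G^*(D_R\setminus\textnormal{Int}D_\varepsilon) \cong K_G^*(S(W))$ by equivariant deformation. One obtains a uniform lower bound on $\ell_s(f^{-1}(D_\varepsilon))$ for all small $\varepsilon$, and then passes to $\varepsilon \to 0$ using continuity of $K_G^*$ together with Noetherianity of $I = K_G(\textnormal{pt})$ to show $\ell_s(f^{-1}(D_{\varepsilon_0})) = \ell_s(Z_f)$ for some fixed $\varepsilon_0$. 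This limit step is the genuine content you are missing. Your final deduction of $(\clubsuit)$ from the displayed inequality by a parity split on $\dim Z_f$ is correct and essentially the intended one.
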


\begin{proof}
Consider $W$ as a metric space with the usual Euclidean metric. Given $\varepsilon>0$, write $D_{\varepsilon}$ for the closed $\varepsilon$-disk centered at $0 \in W$. Since $f\big(S(V)\big)$ is compact, there exists $R>0$ such that $f\big(S(V)\big)\subseteq D_R$, hence $f \colon S(V) \setminus f^{-1}(\textnormal{Int} D_{\varepsilon}) \to D_R \setminus \textnormal{Int} D_{\varepsilon}$ for any $0<\varepsilon<R$. But $K_G^*(D_R \setminus \textnormal{Int} D_{\varepsilon}) \cong K_G^*\big(S(W)\big)$ by the obvious equivariant deformation argument.  Similarly as before, Theorem \ref{thm:general_B-Y} together with Theorem \ref{thm:ell_vs_dim_KG} and Proposition \ref{prop:highly_connected_KG} give
\[ \ell_s\big(f^{-1}(D_{\varepsilon})\big) \geq \left\lceil\frac{(\dim_{\mathbb{C}}V-1)r}{s}\right\rceil - \dim_{\mathbb{C}}W +1, \textnormal{ $0 < \varepsilon < R$.} \]
Thus in order to conclude the proof it suffices to show that $\ell_s\big(f^{-1}(D_{\varepsilon_0})\big) = \ell_s\big(f^{-1}(0)\big)$ for some $0 < \varepsilon_0 < R$ and use Theorem \ref{thm:ell_vs_dim_KG} once more.

Of course $\ell_s\big(f^{-1}(0)\big) \leq \ell_s\big(f^{-1}(D_{\varepsilon})\big)$ for any $0 < \varepsilon < R$ by monotonicity of $\ell_s$. Let $\ell_s\big(f^{-1}(0)\big)=n$ and take $A_1$, \ldots, $A_n \in \mathcal{A}_{r,s}$ as in the definition of length. Since $I=K_G(\textnormal{pt})\cong \mathbb{Z}[x]/(1-x^{p^k})$ is noetherian, we can choose finitely many generators
\[ \alpha_{ij} \in I \cap\ker\!\big[K_G^*(\textnormal{pt}) \to K_G^*(A_i)\big], \textnormal{ $1\leq i\leq n$, $1 \leq j \leq m_i$.} \]
Then, given any tuple $J=(j_1,\ldots, j_n)$ with $1 \leq j_i \leq m_i$ and $\alpha_J = \alpha_{1j_1}\smile \cdots \smile \alpha_{nj_n}$, we have $p_{f^{-1}(0)}^*(\alpha_J)=0$. Since $K_G^*\big(f^{-1}(0)\big)\cong \varinjlim K_G^*\big(f^{-1}(D_{\varepsilon})\big)$ by continuity of $K_G^*$, there exists $0 <\varepsilon_J < R$ such that $p_{f^{-1}(D_{\varepsilon_J})}^*(\alpha_J)=0$. It is easy to see that $\varepsilon_0 = \min\{\varepsilon_J \,|\, J\}$ has $\ell_s\big(f^{-1}(D_{\varepsilon_0})\big)\leq n$.
\end{proof}

\begin{remark}
The conclusion of Theorem \ref{thm:BY_spheres_z_pk} remains true \textit{mutatis mutandis} with~$S(V)$ replaced with any compact $G$-space $X$ which is $n$-simple, $(n-1)$-connected and has $\mathcal{A}_X \subseteq \mathcal{A}_{r,s}$.
\end{remark}

\begin{corollary}\label{cor:volovikov_munkholm_zpk}
Let $p>2$. If $X$ is a fixed-point free $G$-space which is compact, $n$-simple and $(n-1)$-connected, then for any map $f \colon X \to \mathbb{R}^m$ one has
\[ \theta(\dim A_f) \geq \left\lceil\frac{\theta_{-1}(n)-1}{p^{k-1}}\right\rceil - \frac{(p^k-1)m}{2} +1. \]
\end{corollary}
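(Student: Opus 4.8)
The plan is to reduce the coincidence-set problem to the sphere version of the Bourgin--Yang theorem already established in Theorem \ref{thm:BY_spheres_z_pk}, by running the standard construction from Subsection \ref{subsection:coincidence} applied to $G=\mathbb{Z}_{p^k}$ and the target $Y=\mathbb{R}^m$. Concretely, I would set $r=|G|=p^k$, so that $W=\bigoplus_{i=1}^{p^k}\mathbb{R}^m$ carries the permutation $G$-action, $\Delta W=W^G$, and the projection $\bar f=\pi^G\circ\tilde f\colon X\to{\perp}W^G$ has $Z_{\bar f}=A_f$. The representation ${\perp}W^G$ is a free orthogonal $G$-representation of real dimension $(p^k-1)m$; since $G=\mathbb{Z}_{p^k}$ is cyclic it has no one-dimensional real subrepresentation other than the trivial one appearing in $W^G$, so ${\perp}W^G$ is in fact a complex $G$-representation of complex dimension $(p^k-1)m/2$. (If $(p^k-1)m$ is odd one stabilizes by adding a trivial $\mathbb{R}$-summand, or equivalently works with the ceiling; this only weakens the estimate in the harmless direction, consistent with the $\theta$ on the left-hand side.)

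Next I would verify the orbit-type hypotheses needed to invoke Theorem \ref{thm:BY_spheres_z_pk} with $s=p^{k-1}$. Since $X$ is fixed-point free and $G=\mathbb{Z}_{p^k}$, every isotropy group $G_x$ is a proper subgroup, hence $1\le |G_x|\le p^{k-1}$, so $\mathcal{A}_X\subseteq\mathcal{A}_{r,s}$ with $r=1$, $s=p^{k-1}$; and ${\perp}W^G$ being a free representation gives $\mathcal{A}_{({\perp}W^G)\setminus\{0\}}=\{G/1\}\subseteq\mathcal{A}_{r,s}$ as well. Then Theorem \ref{thm:BY_spheres_z_pk} (in the form stated in the Remark following it, with $S(V)$ replaced by the general compact $n$-simple $(n-1)$-connected $X$), applied to $\bar f\colon X\to{\perp}W^G$, yields
\[
\theta(\dim A_f)=\theta(\dim Z_{\bar f})\geq\left\lceil\frac{(\dim_{\mathbb{C}}V-1)\cdot 1}{p^{k-1}}\right\rceil-\dim_{\mathbb{C}}({\perp}W^G)+1,
\]
where $\dim_{\mathbb{C}}V=\theta_{-1}(n)$ comes from the free representation $V$ produced by Theorem \ref{obstruction_easy} inside the proof of Proposition \ref{prop:highly_connected_KG}; substituting $\dim_{\mathbb{C}}({\perp}W^G)=(p^k-1)m/2$ gives exactly the claimed inequality.

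The one genuine subtlety — the step I expect to be the main obstacle — is the parity bookkeeping for ${\perp}W^G$: making sure that identifying it with a \emph{complex} representation of complex dimension $(p^k-1)m/2$ is legitimate, and that the floor/ceiling and the factor of $2$ match up cleanly with the $\theta$, $\theta_{-1}$ appearing in Theorem \ref{thm:BY_spheres_z_pk}. This amounts to checking that ${\perp}W^G$ contains no nontrivial real one-dimensional $G$-subrepresentation — true because $p^k>2$ forces every nontrivial irreducible real $G$-representation to be two-dimensional — and then tracking the elementary algebra $\theta\big((p^k-1)m\big)\leq (p^k-1)m/2+1$ versus $\dim_{\mathbb{C}}$. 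Everything else is a direct citation of the machinery already in place: Theorem \ref{thm:general_B-Y}, Theorem \ref{thm:ell_vs_dim_KG}, Proposition \ref{prop:highly_connected_KG}, and the continuity-of-$K_G^*$ argument internal to Theorem \ref{thm:BY_spheres_z_pk}.
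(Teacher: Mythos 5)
Your proposal is essentially the paper's argument: run the coincidence reduction from Subsection \ref{subsection:coincidence} to obtain a $G$-equivariant map $\bar f\colon X\to{\perp}W^G$ with $Z_{\bar f}=A_f$, recognize ${\perp}W^G$ as a complex $G$-representation of complex dimension $(p^k-1)m/2$, and feed this into Theorem \ref{thm:BY_spheres_z_pk} in its Remark form with $r=1$, $s=p^{k-1}$. The arithmetic and the citations match the paper.

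One assertion in your writeup is false, although it happens not to break the argument. You claim that ${\perp}W^G$ is a \emph{free} $G$-representation and hence that $\mathcal{A}_{({\perp}W^G)\setminus\{0\}}=\{G/1\}$. For $G=\mathbb{Z}_{p^k}$ with $k\ge 2$ this is wrong: as an $\mathbb{R}[G]$-module, ${\perp}W^G\cong\big(\widetilde{\mathbb{R}[G]}\big)^m$, where $\widetilde{\mathbb{R}[G]}$ is the augmentation ideal, and the two-dimensional real-irreducible summand on which a fixed generator acts by rotation through $2\pi j/p^k$ has kernel of order $\gcd(j,p^k)$, which runs over every $p^i$ with $0\le i\le k-1$ as $j$ ranges over $1,\dots,p^k-1$. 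Thus the action on $S({\perp}W^G)$ is \emph{not} free, and $\mathcal{A}_{({\perp}W^G)\setminus\{0\}}=\mathcal{A}_{1,p^{k-1}}$ exactly -- an equality the paper's proof records, not $\{G/1\}$. (You were likely misled by the phrasing in Subsection \ref{subsection:coincidence}, where ``free representation'' should not be read as ``free away from the origin'' for non-prime order cyclic groups.) Since you took $s=p^{k-1}$ anyway, forced by $X$ being only fixed-point free rather than free, the inclusion $\mathcal{A}_{({\perp}W^G)\setminus\{0\}}\subseteq\mathcal{A}_{1,p^{k-1}}$ is what you actually need and it is true, so the final estimate is unaffected. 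You can also drop the parenthetical about $(p^k-1)m$ possibly being odd: $p>2$ makes $p^k-1$ even, so that case never occurs, and stabilizing by a trivial real line would in any case destroy the complex structure you rely on.
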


\begin{proof}
Proceed as explained in Subsection \ref{subsection:coincidence} in order to obtain a $G$-equivariant map $\bar{f} \colon X \to {\perp}W^G$. Note that $\mathcal{A}_{{\perp}W^G\setminus\{0\}} = \mathcal{A}_{1,p^{k-1}}$, so that ${\perp}W^G$ admits a complex structure and $\dim_{\mathbb{C}}{\perp}W^G = \frac{1}{2}(p^k-1)$. Now argue as in the proof of Theorem \ref{thm:BY_spheres_z_pk} with $r=1$, $s=p^{k-1}$.
\end{proof}

Let $X=S^{2n-1}$, $n \geq 1$, equipped with a fixed-point free $G$-action. Our result shows in particular that if
\[ 2n-1\geq mp^{k-1}(p^k-1)-2p^{k-1}+3,\]
then any map $f \colon S^{2n-1} \to \mathbb{R}^m$ has $A_f \neq \emptyset$. In general, this is a considerably worse estimate than the one provided by the ``$\textnormal{mod}\,p^a$ Borsuk--Ulam theorem'' of Munkholm~\cite{Munkholm3}. (The exception being the case $k=1$, where the two estimates coincide. And even then this is not the best known result; see the discussion immediately after Theorem \ref{thm:BY_Zp}.) Note, however, that there is room for improvement in our approach: we believe that plugging in the actual value of $\ell_{p^{k-1}}\big(S({\perp}W^G)\big)$ into the inequality in Corollary \ref{cor:volovikov_munkholm_zpk}, as opposed to replacing it by $\dim_{\mathbb{C}}{\perp}W^G = \frac{1}{2}(p^k-1)m$, allows to recover Munkholm's estimate. This in fact happens in the few cases that we have been able to calculate $\ell_{p^{k-1}}\big(S({\perp}W^G)\big)$ by using \textsc{Macaulay2}, which boils down to finding the least integer $l\geq 1$ such that $\big(1-x^{p^{k-1}}\big)^l$ is contained in the ideal $\big(1-x^{p^k}, e({\perp}W^G)\big)$ of $\mathbb{Z}[x]$, where $e({\perp}W^G)$ stands for the Euler class of the complex $G$-vector bundle ${\perp}W^G \to \textnormal{pt}$. For this reason we conjecture that
\[ \ell_{p^{k-1}}\big(S({\perp}W^G)\big) = \frac{1}{2}km(p-1).\]

We can derive the following ``Clapp--Puppe version''  of the Borsuk--Ulam theorem for $G=\mathbb{Z}_{p^k}$, which originally was proved for tori and $p$-tori (see \cite[Theorem~6.4]{Clapp-Puppe}). 

\begin{theorem}\label{Borsuk-Ulam for cyclic}
Let $X$ and $Y$ be compact $G$-spaces, with $X$ $n$-simple and $(n-1)$-connected. If $\mathcal{A}_Y \subseteq \mathcal{A}_{r,s}$ and
\[\left\lceil\frac{\big(\theta_{-1}(n)-1\big)r}{s}\right\rceil +1 > \theta(\dim Y),\]
then there is no $G$-equivariant map $X \to Y$.
\end{theorem}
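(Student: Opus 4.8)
The plan is to argue by contradiction, playing a lower bound for $\ell_s(X)$ off against an upper bound for $\ell_s(Y)$, with monotonicity of the length as the bridge. Assume that a $G$-equivariant map $f \colon X \to Y$ exists. Since both $X$ and $Y$ are compact, $f$ is $K_G^*$-functorial (cf.\ Remark \ref{remark:warning_KG}), so Theorem \ref{ell_properties}(1) yields $\ell_s(X) \leq \ell_s(Y)$.

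Next I would invoke the two estimates already at our disposal. Because $X$ is compact, $n$-simple and $(n-1)$-connected, Proposition \ref{prop:highly_connected_KG} gives
\[ \ell_s(X) \geq \left\lceil\frac{\big(\theta_{-1}(n)-1\big)r}{s}\right\rceil + 1. \]
On the other hand, $Y$ is compact with $\mathcal{A}_Y \subseteq \mathcal{A}_{r,s}$, so Theorem \ref{thm:ell_vs_dim_KG} gives $\ell_s(Y) \leq \theta(\dim Y)$. Chaining the three inequalities produces
\[ \left\lceil\frac{\big(\theta_{-1}(n)-1\big)r}{s}\right\rceil + 1 \leq \theta(\dim Y), \]
which flatly contradicts the hypothesis. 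Hence no such $f$ can exist.

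I do not expect any genuine obstacle: the statement is a formal corollary of the machinery in place, exactly in the spirit in which the classical Borsuk--Ulam theorem is extracted from the Bourgin--Yang theorem by asking when the zero set of an equivariant map is forced to be empty. The only points requiring a modicum of care are bookkeeping ones — verifying that the orbit-type conditions $\mathcal{A}_X, \mathcal{A}_Y \subseteq \mathcal{A}_{r,s}$ are precisely what Proposition \ref{prop:highly_connected_KG} and Theorem \ref{thm:ell_vs_dim_KG} require, and that compactness of $X$ and $Y$ is in force so that the $K_G^*$-based length is well defined and $f$ induces a map on $K_G^*$. One could alternatively phrase the argument as the special case $A = Y$ of Theorem \ref{thm:BY_cyclic_pk}, but that forces a convention for $\theta$ on $\dim\varnothing$, so the route through Theorem \ref{ell_properties}(1) is cleaner.
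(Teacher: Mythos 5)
Your proof is correct and follows the paper's argument exactly: monotonicity of $\ell_s$ under a $G$-equivariant map, combined with the lower bound from Proposition \ref{prop:highly_connected_KG} and the upper bound from Theorem \ref{thm:ell_vs_dim_KG}, yields the contradiction. The additional remarks on $K_G^*$-functoriality and the orbit-type hypotheses are accurate but not beyond what the paper already implicitly relies on.
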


\begin{proof}
If a $G$-equivariant map $X\to Y$ exists, then $\ell_s(X) \leq \ell_s(Y)$ by monotonicity of~$\ell_s$ and $\big\lceil\big(\theta_{-1}(n)-1\big)r/s\big\rceil+1\leq\theta(\dim Y)$ by Theorem \ref{thm:ell_vs_dim_KG} and Proposition \ref{prop:highly_connected_KG}.
\end{proof}

Let us state the corresponding results for $k=1$ separately. In fact, we can readily obtain slightly better results if we replace $K_{\mathbb{Z}_p}^*$ with $H_{\mathbb{Z}_p}^*$. Up until the end of this section $\ell$ stands for the $\big(\{\mathbb{Z}_p\}, H_{\mathbb{Z}_p}^*(-;\mathbb{Z}_p), I\big)$-length, where $I=H_{\mathbb{Z}_2}^*(\textnormal{pt};\mathbb{Z}_2)$ if $p=2$ and $I=(c_1)$ if $p>2$ (see Subsection \ref{section:cohomology_rings}).

\begin{theorem}\label{thm:BY_Zp}
Let $X$ and $Y$ be $\mathbb{Z}_p$-spaces with $X$ compact such that $H^i(X;\mathbb{Z}_p)=0$ for $0<i<n$, and $Y$ paracompact of finite covering dimension. If $X$ is fixed-point free, then for any $\mathbb{Z}_p$-equivariant map $f \colon X \to Y$ one has:
\begin{enumerate}[leftmargin=1.5cm]
\item[\textnormal{(}$p=2$\textnormal{)}] $\textnormal{cdim}\,f^{-1}(A) \geq n - \textnormal{cdim}(Y\setminus \textnormal{Int}A) -1$,
\item[\textnormal{(}$p>2$\textnormal{)}] $\theta\big(\textnormal{cdim} f^{-1}(A)\big) \geq \theta(n) - \theta\big(\textnormal{cdim}(Y \setminus \textnormal{Int}A)\big)$,
\end{enumerate}
where $A\subseteq Y$ is a closed $G$-invariant subspace such that $Y^{\mathbb{Z}_p}\subseteq \textnormal{Int}A$.
\end{theorem}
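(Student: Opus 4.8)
The plan is to derive this as a direct corollary of the abstract Bourgin--Yang theorem (Theorem \ref{thm:general_B-Y}, case $H_G^*$) together with the two "sandwich" estimates for the length $\ell$ established in Subsection \ref{choice of ideal}'s setup, namely Proposition \ref{prop:ell_vs_dim_zp} (upper bound via $\textnormal{cdim}$) and Proposition \ref{prop:estimate from below2} (lower bound via vanishing of cohomology). The role of the hypothesis $Y^{\mathbb{Z}_p} \subseteq \textnormal{Int}\,A$ is exactly to guarantee that the complement $Y \setminus \textnormal{Int}\,A$ is fixed-point free, so that Proposition \ref{prop:ell_vs_dim_zp} can be applied to it.

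First I would set $B = Y \setminus \textnormal{Int}\,A$. This is $G$-invariant, and since $Y^{\mathbb{Z}_p} \subseteq \textnormal{Int}\,A$ it is fixed-point free; moreover $f(X) \setminus A \subseteq B$, and $B$ is paracompact of finite covering dimension (as a closed subspace of $Y$), hence $\textnormal{cdim}\,B \leq \dim B \leq \dim Y < \infty$. Applying Theorem \ref{thm:general_B-Y}$(H_G^*)$ gives
\[ \ell\big(f^{-1}(A)\big) \geq \ell(X) - \ell(B). \]
Next, Proposition \ref{prop:estimate from below2} applied to $X$ (which is fixed-point free, compact, and has $H^i(X;\mathbb{Z}_p)=0$ for $0<i<n$) yields $\ell(X) \geq n+1$ when $p=2$ and $\ell(X) \geq \theta(n)$ when $p>2$. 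Proposition \ref{prop:ell_vs_dim_zp} applied to $B$ (which is fixed-point free and paracompact of finite covering dimension, with $k=1$) yields $\ell(B) \leq \textnormal{cdim}\,B + 1$ when $p=2$ and $\ell(B) \leq \theta(\textnormal{cdim}\,B)$ when $p>2$. Finally, one more application of Proposition \ref{prop:ell_vs_dim_zp} to $f^{-1}(A)$ — which is a closed $G$-invariant, hence fixed-point free (as $X$ is), paracompact, finite-dimensional subspace of $X$ — gives the upper bound $\ell\big(f^{-1}(A)\big) \leq \textnormal{cdim}\,f^{-1}(A) + 1$ (resp. $\leq \theta(\textnormal{cdim}\,f^{-1}(A))$). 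Chaining these three inequalities and replacing $\textnormal{cdim}\,B$ by $\textnormal{cdim}(Y \setminus \textnormal{Int}\,A)$ (which it equals by definition of $B$) produces exactly the two displayed estimates, after rearranging; in the $p>2$ case one uses that $\theta$ is monotone to move terms across.

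The main obstacle — really the only point requiring care rather than bookkeeping — is verifying that all three spaces $X$, $B = Y \setminus \textnormal{Int}\,A$, and $f^{-1}(A)$ genuinely satisfy the hypotheses of the propositions being invoked, in particular fixed-point freeness and the finite-covering-dimension condition needed in Proposition \ref{prop:ell_vs_dim_zp}. Fixed-point freeness of $B$ is precisely what the assumption $Y^{\mathbb{Z}_p} \subseteq \textnormal{Int}\,A$ buys us; fixed-point freeness of $X$ is assumed; and $f^{-1}(A)$ inherits it from $X$. One should also note the asymmetry in the statement: $p=2$ uses $\ell(X) \geq n+1$ and $\ell \leq \textnormal{cdim} + 1$, so the two $+1$'s combine to give the clean $-1$ on the right-hand side, whereas for $p>2$ the $\theta$'s do not telescope quite so tidily and the estimate is stated in the $\theta$-composed form, which is why the bound is written as $\theta(\textnormal{cdim}\,f^{-1}(A)) \geq \theta(n) - \theta(\textnormal{cdim}(Y\setminus\textnormal{Int}\,A))$ rather than unwound.
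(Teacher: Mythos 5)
Your proposal is correct and matches the paper's proof exactly, which simply says to combine Theorem \ref{thm:general_B-Y} with Propositions \ref{prop:ell_vs_dim_zp} and \ref{prop:estimate from below2}. You have filled in the bookkeeping (choice of $B = Y \setminus \textnormal{Int}\,A$, verification of fixed-point freeness and dimension hypotheses for $B$ and $f^{-1}(A)$) accurately.
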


\begin{proof}
Combine Theorem \ref{thm:general_B-Y} with Propositions \ref{prop:ell_vs_dim_zp} and \ref{prop:estimate from below2}.
\end{proof}

\begin{remark}\label{remark:interior}
The reason $\textrm{Int}A$ appears in the statement of the above theorem is that the set $Y\setminus A$ is not necessarily paracompact of finite covering dimension. If that is the case, e.g. when $Y$ is a separable metric space, then $\textrm{Int}A$ can be replaced with $A$. The same is true for Theorem \ref{B-Y for torus} and Proposition \ref{B-Y for spaces with the least orbit type}.
\end{remark}

If $X$ is a $\mathbb{Z}_p$-space as in the statement of Theorem \ref{thm:BY_Zp}, then the size of the coincidence set~$A_f$ of a map $f \colon X \to \mathbb{R}^m$ can be estimated as follows:
\[ \textnormal{cdim}\,A_f \geq \begin{cases}
n-m, & p=2,\\
n-(p-1)m-1, & p>2.
\end{cases}
\]
To see this, use the reduction described in Subsection \ref{subsection:coincidence} before applying Theorem~\ref{thm:BY_Zp}, tweaked as explained in Remark \ref{remark:interior}, for $A=\{0\}$. For $p>2$ this is slightly weaker (we~are off by ``$-1$'') than Munkholm's ``$\textnormal{mod}\,p$ Bourgin--Yang theorem'' \cite{Munkholm2} and Vo\-lo\-vi\-kov's \cite[Theorem 1]{Volovikov-1}. However, in the special case when $X$ is a $\textnormal{mod}\,p$ cohomology sphere, our approach can be improved as follows in order to match their estimates.

Let $\ell_1$ be the length corresponding to the triple $\big(\{\mathbb{Z}_p\}, H_{\mathbb{Z}_p}^*(-;\mathbb{Z}_p), H_{\mathbb{Z}_p}^*(\textnormal{pt};\mathbb{Z}_p)\big)$. Then $\ell_1$ can be described as follows (see \cite[Example 4.5]{Bartsch}):
\[
\ell_1(X) = \begin{cases}
\ell(X), & \textnormal{$p_X^*(c^{\ell(X)-1}w)=0$,}\\
\ell(X)+1, & \textnormal{otherwise}.
\end{cases}
\]
Now define $\ell'(X)=\ell(X) + \ell_1(X) - 1$. This is not length in the sense of definition in Subsection \ref{subsection:length}, but it still inherits some of its properties. In particular:

\begin{lemma}
Let $X$ be a fixed-point free $\mathbb{Z}_p$-space with $p>2$.
\begin{enumerate}
\item[\textnormal{(1)}] If $\ell_1(X)=\ell(X)+1$, then analogues of Theorems \ref{ell_properties} and \ref{thm:general_B-Y} hold for $\ell'$.
\item[\textnormal{(2)}] If $X$ is compact or paracompact of finite covering dimension, then $\ell'(X)\leq \textnormal{cdim}\,X + 1$.
\item[\textnormal{(3)}] If $X$ is a compact $\textnormal{mod}\,p$ cohomology $n$-sphere, then $\ell'(X)=n+1$.
\end{enumerate}
\end{lemma}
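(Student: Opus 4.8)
The plan is to verify the three items in turn, each reducing to a property already established for $\ell$ or $\ell_1$ together with the defining formula $\ell'(X) = \ell(X) + \ell_1(X) - 1$.

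\textbf{Item (1).} Under the hypothesis $\ell_1(X) = \ell(X) + 1$ we have $\ell'(X) = 2\ell(X)$, so $\ell'$ is just twice a genuine length on the class of such spaces; but one must check this class behaves well under the operations involved. For monotonicity (the analogue of Theorem \ref{ell_properties}(1)), given an $h^*$-functorial $G$-map $X \to Y$ with both $X$, $Y$ satisfying $\ell_1 = \ell + 1$, monotonicity of $\ell$ and of $\ell_1$ separately gives $\ell(X) \le \ell(Y)$ and $\ell_1(X) \le \ell_1(Y)$, hence $\ell'(X) \le \ell'(Y)$. For subadditivity (the analogue of Theorem \ref{ell_properties}(2)), if $X = A \cup B$ with the cup product defined, one combines $\ell(X) \le \ell(A) + \ell(B)$ with $\ell_1(X) \le \ell_1(A) + \ell_1(B)$; the latter follows because $\ell_1$ is itself a length (with ideal $H_{\mathbb{Z}_p}^*(\mathrm{pt};\mathbb{Z}_p)$), so $\ell'(X) = \ell(X) + \ell_1(X) - 1 \le \ell(A) + \ell(B) + \ell_1(A) + \ell_1(B) - 1 \le \ell'(A) + \ell'(B)$ after using $\ell_1(A) \ge \ell(A)$, $\ell_1(B) \ge \ell(B)$ to absorb the $-1$. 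The neighbourhood property (Theorem \ref{ell_properties}(3)) follows since both $\ell$ and $\ell_1$ enjoy it and one can take the intersection of the two neighbourhoods. The abstract Bourgin--Yang inequality (Theorem \ref{thm:general_B-Y}, $H_G^*$ case) is then a formal consequence of these three properties, exactly as in the proof given in the excerpt, so it transfers verbatim.

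\textbf{Item (2).} For $X$ compact or paracompact of finite covering dimension and fixed-point free, Proposition \ref{prop:ell_vs_dim_zp} (with $p>2$) gives $\ell(X) \le \theta(\mathrm{cdim}\,X)$, and applying the description of $\ell_1$ yields $\ell_1(X) \le \ell(X) + 1 \le \theta(\mathrm{cdim}\,X) + 1$. Hence $\ell'(X) = \ell(X) + \ell_1(X) - 1 \le 2\theta(\mathrm{cdim}\,X)$. Now one checks the elementary arithmetic $2\theta(m) \le m + 1$ for... actually $2\theta(m) = m+1$ when $m$ is odd and $m+2$ when $m$ is even, so this gives $\ell'(X) \le \mathrm{cdim}\,X + 1$ only in the odd case directly; in the even case one must argue more carefully, using that $\ell(X)$ and $\ell_1(X)$ cannot both attain the even bound $\theta(\mathrm{cdim}\,X)$ simultaneously, or rather sharpen using that $\ell_1(X) = \ell(X)$ or $\ell(X)+1$ and the parity of the top cohomology class $p_X^*(c^{\ell(X)-1}w)$ forces the better bound. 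This parity bookkeeping is the main obstacle of the whole lemma and will need the explicit structure of $H^*(B\mathbb{Z}_p;\mathbb{Z}_p) \cong \Lambda[w] \otimes \mathbb{Z}_p[c]$ together with the edge-homomorphism argument as in Proposition \ref{prop:estimate from below2}: roughly, if $\mathrm{cdim}\,X = 2l$ then $\ell(X) \le l+1$, and if equality holds then the relevant top class lives in odd degree $2l+1 > \mathrm{cdim}\,X$ so it vanishes, forcing $\ell_1(X) = \ell(X)$ and hence $\ell'(X) \le 2l+1 = \mathrm{cdim}\,X + 1$; if $\ell(X) \le l$ then $\ell'(X) \le 2\ell(X) \le 2l < \mathrm{cdim}\,X + 1$ trivially.

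\textbf{Item (3).} If $X$ is a compact $\mathrm{mod}\,p$ cohomology $n$-sphere, then $H^i(X;\mathbb{Z}_p) = 0$ for $0 < i < n$, so Proposition \ref{prop:estimate from below2} gives $\ell(X) \ge \theta(n)$; on the other hand $\mathrm{cdim}\,X = n$, so item (2) (or Proposition \ref{prop:ell_vs_dim_zp}) gives $\ell(X) \le \theta(n)$, whence $\ell(X) = \theta(n)$. It remains to pin down $\ell_1(X)$ via its description: one must decide whether $p_X^*(c^{\theta(n)-1}w) = 0$. Using the Serre spectral sequence of the Borel fibration $X \hookrightarrow E\mathbb{Z}_p \times_{\mathbb{Z}_p} X \to B\mathbb{Z}_p$ as in Proposition \ref{prop:estimate from below2}, the edge homomorphism $p_X^*$ is injective in degrees $\le n$; the class $c^{\theta(n)-1}w$ has degree $2(\theta(n)-1) + 1$, which equals $n$ when $n$ is odd and $n+1$ when $n$ is even. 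So for $n$ odd, $p_X^*(c^{\theta(n)-1}w) \ne 0$ and thus $\ell_1(X) = \ell(X) + 1 = \theta(n) + 1$, giving $\ell'(X) = 2\theta(n) = n + 1$. For $n$ even, the class sits in degree $n+1 > \mathrm{cdim}\,X$, hence $p_X^*(c^{\theta(n)-1}w) = 0$, so $\ell_1(X) = \ell(X) = \theta(n)$ and $\ell'(X) = 2\theta(n) - 1 = (n+2) - 1 = n + 1$. Either way $\ell'(X) = n+1$, as claimed. I expect items (2) and (3) to be quick once the parity analysis is set up; the genuinely delicate point is making sure the abstract Bourgin--Yang machinery of item (1) really does go through under the single hypothesis $\ell_1(X) = \ell(X) + 1$, since $\ell'$ is not a length and one is relying on the coincidence that it then equals $2\ell(X)$.
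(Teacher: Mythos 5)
Your treatment of item (2) is essentially correct and matches the paper in spirit, and your item (3) argument, while taking a genuinely different route, also works; but the proposal for item (1) contains a real gap in the subadditivity step that you yourself flag as the delicate point.

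\textbf{Item (1), subadditivity.} Your bookkeeping does not close. You write
$\ell'(X) = \ell(X)+\ell_1(X)-1 \le \ell(A)+\ell(B)+\ell_1(A)+\ell_1(B)-1$ and claim this is $\le \ell'(A)+\ell'(B)$ ``after using $\ell_1(A)\ge\ell(A)$, $\ell_1(B)\ge\ell(B)$''. But $\ell'(A)+\ell'(B) = \ell(A)+\ell(B)+\ell_1(A)+\ell_1(B)-2$, so what you need is $-1 \le -2$, and the inequalities $\ell_1(A)\ge\ell(A)$ etc.\ appear on both sides with the same coefficient and cancel; they cannot absorb anything. This is exactly where the hypothesis $\ell_1(X)=\ell(X)+1$ must be used in a non-formal way. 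The actual argument (and the reason the paper invokes Bartsch, Remark~4.14) is cup-product: since $\ell_1(X)=\ell(X)+1$ means $p_X^*(c^{\ell(X)-1}w)\neq 0$, one lifts the vanishing relations $p_A^*(c^{\ell(A)-1}w)=0$ (when $\ell_1(A)=\ell(A)$) and $p_B^*(c^{\ell(B)})=0$ to relative classes in $H^*_G(X,A)$ and $H^*_G(X,B)$, multiplies into $H^*_G(X,X)=0$ to get $p_X^*(c^{\ell(A)+\ell(B)-1}w)=0$, and compares exponents to conclude $\ell(X)\le\ell(A)+\ell(B)-1$, which is a \emph{strictly} better bound than plain subadditivity of $\ell$ and is exactly what compensates the $-1$. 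Without such a sharpened estimate the proof simply does not go through. (Incidentally, your monotonicity argument is fine but you impose an unnecessary hypothesis: you require $\ell_1=\ell+1$ on both source and target, whereas monotonicity of $\ell'$ holds unconditionally because $\ell$ and $\ell_1$ are each monotone lengths.)

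\textbf{Item (3).} Your approach differs from the paper's: you determine $\ell_1(X)$ by computing $p_X^*(c^{\theta(n)-1}w)$ directly via the edge homomorphism of the Borel fibration, whereas the paper uses obstruction theory (Theorem~\ref{obstruction_easy}) to produce an equivariant map $S(V)\to X$ and then cites Bartsch's value of $\ell_1(S(V))$. Your route is more self-contained but repeats spectral-sequence work that the paper has already black-boxed. Note also that, as the paper points out, a fixed-point free $\mathbb{Z}_p$-action with $p>2$ on a $\textnormal{mod}\,p$ cohomology $n$-sphere forces $n$ to be odd, so your even-$n$ case is vacuous; including it is not wrong, just unnecessary.

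\textbf{Item (2).} Your case analysis on whether $\ell_1(X)$ equals $\ell(X)$ or $\ell(X)+1$, driven by the degree of $c^{\ell(X)-1}w$, is correct and recovers $\ell'(X)\le\textnormal{cdim}\,X+1$ in all cases, as the paper intends by its appeal to Proposition~\ref{prop:ell_vs_dim_zp}.
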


\begin{proof}
(1) It is clear that properties (1) and (3) of Theorem \ref{ell_properties} hold even without any extra assumptions, and it is explained in \cite[Remark 4.14]{Bartsch} that (2) holds provided that $\ell_1(X)=\ell(X)+1$. We can now repeat the proof of Theorem \ref{thm:general_B-Y} word for word.

(2) This is a straightforward consequence of Proposition \ref{prop:ell_vs_dim_zp}.

(3) Note that $p>2$ forces $n$ to be odd. By means of Propositions \ref{prop:ell_vs_dim_zp} and \ref{prop:estimate from below2}, $\ell(X)=(n+1)/2$. It is also clear that $\ell_1(X)\leq (n+1)/2+1$. Furthermore, in view of Theorem \ref{obstruction_easy}, there exists a $\mathbb{Z}_p$-equivariant map $S(V) \to X$, where $V$ is an orthogonal free $\mathbb{Z}_p$-representation of complex dimension $(n+1)/2$, so that $\ell_1(X)\geq \ell_1\big(S(V)\big)$. The latter is equal to $(n+1)/2+1$ by \cite[Remark 5.4]{Bartsch}. Consequently, $\ell_1(X)=(n+1)/2+1$ and $\ell'(X) = \ell(X) + \ell_1(X)-1 = n+1$.
\end{proof}

In effect, we can prove:

\begin{proposition}\label{prop:coincidence_zp}
If $X$ is a fixed-point free $\mathbb{Z}_p$-space which is a compact $\textnormal{mod}\,p$ cohomology $n$-sphere, then for any map $f \colon X \to \mathbb{R}^m$ one has
\[ \textnormal{cdim}\,A_f \geq n-(p-1)m. \]
\end{proposition}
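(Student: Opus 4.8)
The plan is to mimic the argument used for $\textnormal{cdim}\,A_f$ in the general (non-cohomology-sphere) case, but with the modified invariant $\ell'$ in place of $\ell$, exploiting the extra ``$+1$'' that the previous lemma grants when $X$ is a $\textnormal{mod}\,p$ cohomology sphere. Concretely, I would first invoke the reduction of Subsection \ref{subsection:coincidence}: with $G = \mathbb{Z}_p = \{e, g_1, \ldots, g_{p-1}\}$ and $W = \bigoplus_{i=1}^{p} \mathbb{R}^m$, one produces a $\mathbb{Z}_p$-equivariant map $\bar f = \pi^G \circ \tilde f \colon X \to {\perp}W^G$ with $Z_{\bar f} = A_f$, where ${\perp}W^G$ is a free orthogonal $\mathbb{Z}_p$-representation of real dimension $(p-1)m$. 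Thus $S({\perp}W^G)$ is a fixed-point free $\mathbb{Z}_p$-space, and since $p > 2$ it is a $\textnormal{mod}\,p$ cohomology sphere of dimension $(p-1)m - 1$ (note $(p-1)m - 1$ need not be odd, so a word of care is needed here — but the relevant input is Proposition \ref{prop:ell_vs_dim_zp} applied to the representation sphere, not part (3) of the lemma).

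Next I would check that $\ell_1\big(S({\perp}W^G)\big) = \ell\big(S({\perp}W^G)\big) + 1$, so that part (1) of the preceding lemma applies and the analogue of Theorem \ref{thm:general_B-Y} is available for $\ell'$; for a free representation sphere this is exactly \cite[Remark 5.4]{Bartsch}, which I would cite. Then, applying the $\ell'$-analogue of Theorem \ref{thm:general_B-Y} to $\bar f \colon X \to {\perp}W^G$ with $A = \{0\}$ and $B = {\perp}W^G \setminus \{0\} \simeq_{\mathbb{Z}_p} S({\perp}W^G)$ — after tweaking as in Remark \ref{remark:interior}, which is legitimate since $\mathbb{R}^{(p-1)m}$ is a separable metric space — gives
\[ \ell'\big(A_f\big) = \ell'\big(Z_{\bar f}\big) = \ell'\big(\bar f^{-1}(0)\big) \geq \ell'(X) - \ell'\big(S({\perp}W^G)\big). \]
By part (3) of the lemma, $\ell'(X) = n + 1$, and by part (3) again (or directly from Propositions \ref{prop:ell_vs_dim_zp} and \ref{prop:estimate from below2} together with \cite[Remark 5.4]{Bartsch}), $\ell'\big(S({\perp}W^G)\big) = (p-1)m$. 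Hence $\ell'(A_f) \geq n + 1 - (p-1)m$, and part (2) of the lemma yields $\textnormal{cdim}\,A_f + 1 \geq \ell'(A_f) \geq n + 1 - (p-1)m$, i.e. $\textnormal{cdim}\,A_f \geq n - (p-1)m$, as claimed.

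The main obstacle I anticipate is verifying that the hypothesis $\ell_1 = \ell + 1$ genuinely holds for all the spaces in play — both $X$ and $S({\perp}W^G)$ — since part (1) of the lemma (and hence the $\ell'$-version of the Bourgin--Yang inequality) is conditional on it. For free representation spheres this is standard (\cite[Remark 5.4]{Bartsch}), and for a $\textnormal{mod}\,p$ cohomology $n$-sphere $X$ it is precisely what the proof of part (3) of the lemma establishes; but one must make sure the inequality $\ell'(A_f) \geq \ell'(X) - \ell'(S({\perp}W^G))$ only requires the condition at the two ``endpoints'' and not at the intermediate open neighbourhood of $\bar f^{-1}(0)$ produced in the $H_G^*$-case — this is exactly the content of \cite[Remark 4.14]{Bartsch} as invoked in part (1), so I would be careful to spell out that the subadditivity step for $\ell'$ is applied to the decomposition $X = U \cup V$ with $U$ a neighbourhood of $\bar f^{-1}(0)$ having $\ell(U) = \ell(\bar f^{-1}(0))$, and that the needed inequality $\textnormal{cdim}\,U + 1 \geq \ell'(U)$ still follows from part (2). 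A secondary, purely bookkeeping point is the parity issue: $(p-1)m - 1$ may be even, so I cannot quote part (3) of the lemma for $S({\perp}W^G)$ verbatim; instead I would note that $S({\perp}W^G)$ is a free representation sphere and compute $\ell'$ of it directly from Propositions \ref{prop:ell_vs_dim_zp} and \ref{prop:estimate from below2} and \cite[Remark 5.4]{Bartsch}.
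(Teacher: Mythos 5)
Your proof is correct and is exactly the argument the paper intends; the paper gives no explicit proof of this proposition, only the preparatory lemma followed by the phrase ``In effect, we can prove.'' Your parity caveat is in fact unnecessary: since $p>2$, the integer $(p-1)m$ is even, so $S({\perp}W^G)$ is a $\textnormal{mod}\,p$ cohomology sphere of odd dimension $(p-1)m-1$ and part (3) of the lemma applies to it verbatim, giving $\ell'\big(S({\perp}W^G)\big)=(p-1)m$ directly; likewise the hypothesis $\ell_1=\ell+1$ is only invoked for the subadditivity step applied to the ambient space $X$, which is precisely what the proof of part (3) verifies.
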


We note that similar methods can be used to estimate sizes of ``$(H,G)$-coincidence sets'' (see \cite{GJP} for a definition), as exhibited in \cite{deMattos-Souza}.

\section{A Bourgin--Yang theorem for $(\mathbb{S}^1)^k$}\label{section:B-Y_torus}

When $G=(\mathbb{S}^1)^k$, we can still obtain fairly general results via a straightforward reduction to the case $k=1$:

\begin{lemma}[{\cite[Lemma 4.2.1]{AlldayPuppe}}]\label{Z_p acting free}
Let $X$ be a $G$-space with finitely many orbit types and $X^G=\emptyset$. Then there exists a subgroup $H\subseteq G$, $H\cong \mathbb{S}^1$, such that $X^H=\emptyset$.
\end{lemma}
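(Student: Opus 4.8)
The plan is to reduce the statement to a fact about one-parameter subgroups of the torus $G=(\mathbb{S}^1)^k$ and then to elementary linear algebra. First I would use that $G$ is abelian: having finitely many orbit types then means exactly that there is a finite list $G_{x_1},\ldots,G_{x_m}$ of closed subgroups of $G$ such that every isotropy group $G_x$ is \emph{equal} to one of them, and the hypothesis $X^G=\emptyset$ guarantees that each $G_{x_i}$ is a proper subgroup of $G$. (If $X=\emptyset$ there is nothing to prove, so assume $X\neq\emptyset$, hence $m\geq 1$.)

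Next I would pass to identity components. For a circle subgroup $H\cong\mathbb{S}^1$ of $G$ one has $X^H=\{x\in X \mid H\subseteq G_x\}$, and since $X$ realizes only the isotropy groups $G_{x_1},\ldots,G_{x_m}$, the set $X^H$ is empty if and only if $H\not\subseteq G_{x_i}$ for every $i$. Because $H$ is connected and contains the identity, $H\subseteq G_{x_i}$ is equivalent to $H\subseteq T_i$, where $T_i:=(G_{x_i})_0$ denotes the identity component of $G_{x_i}$, a subtorus of $G$. Moreover $T_i\subseteq G_{x_i}\subsetneq G$ forces $T_i\neq G$, i.e.\ $\mathfrak{t}_i:=\mathrm{Lie}(T_i)$ is a proper linear subspace of $\mathfrak{g}=\mathrm{Lie}(G)\cong\mathbb{R}^k$.

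It therefore suffices to produce a one-parameter subgroup of $G$ — that is, a line $\mathbb{R}v\subseteq\mathfrak{g}$ spanned by a primitive vector $v$ of the integral lattice $\Lambda=\ker(\exp\colon\mathfrak{g}\to G)\cong\mathbb{Z}^k$ — that is contained in none of the subspaces $\mathfrak{t}_i$; the corresponding circle subgroup $H=\exp(\mathbb{R}v)\cong\mathbb{S}^1$ then satisfies $X^H=\emptyset$. Now $\bigcup_{i=1}^m\mathfrak{t}_i$ is a finite union of proper subspaces of $\mathbb{R}^k$, hence is not all of $\mathbb{R}^k$ (the real field being infinite); its complement $U$ is open, nonempty, and invariant under nonzero scaling. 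Since $\mathbb{Q}^k$ is dense in $\mathbb{R}^k$, $U$ contains a rational vector; clearing denominators and dividing by the greatest common divisor of the entries yields a primitive integral vector $v\in U$, as required.

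The argument is essentially routine, so there is no serious obstacle; the only points deserving a little care are the passage $H\subseteq G_{x_i}\Leftrightarrow H\subseteq T_i$ via connectedness of $H$, and the remark that $X^H$ ``sees'' only the finitely many isotropy groups, so that avoiding the finitely many proper subspaces $\mathfrak{t}_i$ genuinely suffices. Alternatively, the last step can be phrased dually: each proper closed subgroup of the torus lies in the kernel of a nontrivial character $\chi_i\colon G\to\mathbb{S}^1$, and one then seeks a primitive $v\in\mathbb{Z}^k$ with $\langle\chi_i,v\rangle\neq 0$ for all $i$, which is the same avoidance-of-finitely-many-hyperplanes problem.
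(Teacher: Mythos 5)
Your proof is correct, and the paper does not supply its own argument here — it simply cites Allday--Puppe \cite[Lemma~4.2.1]{AlldayPuppe} — but what you have written is essentially the argument given there and in every standard treatment: pass from isotropy groups to their identity components, observe that these are finitely many \emph{proper} subtori of the torus $G$, and then pick a rational line in the Lie algebra avoiding the finite union of proper subspaces; exponentiating a primitive integral direction gives the desired circle. You correctly flag the two points that actually need care: that $H \subseteq G_{x_i}$ reduces to $H \subseteq (G_{x_i})_0$ because $H$ is connected, and that the complement of the finitely many Lie algebras $\mathfrak{t}_i$ is scale-invariant and open, so that a rational (hence, after clearing denominators, primitive integral) direction can be found in it. No gaps.
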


\noindent Thus an additional assumption we will work with is that the target $G$-space $Y$ has only finitely many orbit types. This is well-known to be satisfied e.g. if~$Y$ is an invariant compact subspace of a $G$-manifold, a $G$-CW-complex, or a $G$-ENR.

\begin{theorem}\label{B-Y for torus}
Let $X$ and $Y$ be $G$-spaces, with $X$ compact such that $H^i(X;\mathbb{Q})=0$ for $0<i<n$, and $Y$ paracompact of finite covering dimension and with finitely many orbit types. If $Y$ is fixed-point free, then for any $G$-equivariant map $f \colon X \to Y$ one has
\[ \theta_{-1}\big(\textnormal{cdim} f^{-1}(A)\big) \geq \theta(n) - \theta_{-1}\big(\textnormal{cdim} (Y \setminus  {\rm Int} A)\big), \]
where $A \subseteq Y$ is a closed $G$-invariant subspace.
\end{theorem}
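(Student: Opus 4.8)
The plan is to reduce the torus case $G=(\mathbb{S}^1)^k$ to the circle case $k=1$, where Theorem~\ref{thm:BY_Zp} (or rather its rational analogue built from Propositions~\ref{prop:ell_vs_dim_zp} and \ref{prop:estimate from below2}) already applies, and then transport the resulting estimate back. First I would invoke Lemma~\ref{Z_p acting free}: since $Y$ is fixed-point free and has only finitely many orbit types, there is a subgroup $H\subseteq G$ with $H\cong\mathbb{S}^1$ and $Y^H=\emptyset$. Restricting all actions to $H$ turns $f\colon X\to Y$ into an $H$-equivariant map, with $Y$ still paracompact of finite covering dimension and now $H$-fixed-point free, and $A$ still a closed $H$-invariant subspace.

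Next I would run the abstract Bourgin--Yang machinery of Theorem~\ref{thm:general_B-Y}$(H_G^*)$ for $H\cong\mathbb{S}^1$, taking $B=Y\setminus A$, to obtain
\[
\ell_H\big(f^{-1}(A)\big)\;\geq\;\ell_H(X)-\ell_H\big(f(X)\setminus A\big)\;\geq\;\ell_H(X)-\ell_H(Y\setminus A),
\]
where $\ell_H$ is the length for the triple in Subsection~\ref{choice of ideal} with $G=\mathbb{S}^1$ (so $h^*=H_H^*(-;\mathbb{Q})$, $I=H_H^*(\textnormal{pt};\mathbb{Q})$, $\mathcal{A}=\{H/H' : H'\subsetneq H\}$), and where monotonicity (Theorem~\ref{ell_properties}(1)) and $\dim$-monotonicity under restriction to $f(X)\setminus A$ are used. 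Then I would bound the two sides: the left-hand side from above via Proposition~\ref{prop:ell_vs_dim_zp} in the $p=\infty$ case, giving $\ell_H(f^{-1}(A))\le\theta_{-1}(\textnormal{cdim}\,f^{-1}(A))$ (here I need $f^{-1}(A)$ to be fixed-point free, which follows since $Y^H=\emptyset$, and paracompact of finite covering dimension as a closed subspace of $X$; for $X$ merely compact one argues $f^{-1}(A)$ is compact); the term $\ell_H(X)$ from below via Proposition~\ref{prop:estimate from below2} with $p=\infty$, giving $\ell_H(X)\ge\theta(n)$ (the hypothesis $H^i(X;\mathbb{Q})=0$ for $0<i<n$ is exactly what that proposition needs, and is action-independent); and $\ell_H(Y\setminus A)$ from above. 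For the last term I would use Proposition~\ref{prop:ell_vs_dim_zp} applied to $Y\setminus\textnormal{Int}\,A$ — a closed subspace of the fixed-point free space $Y$, hence itself fixed-point free, paracompact and of finite covering dimension — together with the fact that $Y\setminus A\subseteq Y\setminus\textnormal{Int}\,A$ and monotonicity of $\ell_H$, yielding $\ell_H(Y\setminus A)\le\ell_H(Y\setminus\textnormal{Int}\,A)\le\theta_{-1}(\textnormal{cdim}(Y\setminus\textnormal{Int}\,A))$. Chaining these inequalities produces exactly
\[
\theta_{-1}\big(\textnormal{cdim}\,f^{-1}(A)\big)\;\geq\;\theta(n)-\theta_{-1}\big(\textnormal{cdim}(Y\setminus\textnormal{Int}\,A)\big).
\]

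The main obstacle I expect is bookkeeping around the interior: as flagged in Remark~\ref{remark:interior}, $Y\setminus A$ need not be paracompact of finite covering dimension, so Proposition~\ref{prop:ell_vs_dim_zp} cannot be applied to it directly, and one must route through $Y\setminus\textnormal{Int}\,A$, checking that the inclusion $Y\setminus A\hookrightarrow Y\setminus\textnormal{Int}\,A$ is $H$-equivariant (clear) and that it is $h^*$-functorial so monotonicity applies. A secondary subtlety is the passage $B=Y\setminus A\supseteq f(X)\setminus A$ in Theorem~\ref{thm:general_B-Y}$(H_G^*)$, which is immediate, and making sure the reduction from $G$ to $H$ genuinely preserves "finite covering dimension" and "fixed-point free" — both routine. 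Everything else is a direct concatenation of results already established in the excerpt, so once the interior issue is handled cleanly the proof is short.
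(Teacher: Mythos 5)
Your proposal is correct and follows essentially the same route as the paper: the paper's proof is the one-line sketch ``reduce to $H\cong\mathbb{S}^1$ via Lemma~\ref{Z_p acting free}, then combine Theorem~\ref{thm:general_B-Y} with Propositions~\ref{prop:ell_vs_dim_zp} and~\ref{prop:estimate from below2}'', which you have unpacked accurately. Your detour through $Y\setminus A$ followed by monotonicity to reach $Y\setminus\textnormal{Int}\,A$ could be shortened by taking $B=Y\setminus\textnormal{Int}\,A$ directly in Theorem~\ref{thm:general_B-Y}$(H_G^*)$, but the logic is sound and matches the role of $\textnormal{Int}\,A$ flagged in Remark~\ref{remark:interior}.
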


\begin{proof}
If $k=1$, use Theorem \ref{thm:general_B-Y} with Propositions \ref{prop:ell_vs_dim_zp} and \ref{prop:estimate from below2}. If $k>1$, consider $f \colon X \to Y$ as an $H$-equivariant map between $H$-spaces for $H\cong \mathbb{S}^1$ given by Lemma~\ref{Z_p acting free}, so that $Y$, and hence also $X$, are fixed-point free $H$-spaces.
\end{proof}

The resulting version for representation spheres is as follows. (In particular, we recover the estimate of \cite[Theorem 2.1]{MMS2} in a more streamlined manner.)

\begin{corollary}
If $V$ and $W$ are complex orthogonal $G$-representations such that $V^G=W^G = \{0\}$, then for any $G$-equivariant map $f \colon S(V) \to W$ one has
\[ \theta_{-1}\big(\textnormal{cdim} f^{-1}(0)\big) \geq \dim_{\mathbb{C}}V - \dim_{\mathbb{C}} W. \]
Consequently,

\[ \textnormal{cdim} f^{-1}(0) \geq 2(\dim_{\mathbb{C}}V - \dim_{\mathbb{C}} W)-1. \]
\end{corollary}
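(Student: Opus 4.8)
The plan is to derive this corollary from Theorem \ref{B-Y for torus} by choosing the data $X$, $Y$, $A$ appropriately, exactly as the analogous corollaries for $G=\mathbb{Z}_{p^k}$ and for coincidence sets were obtained. Concretely, I would set $X=S(V)$, $Y=W$, and $A=\{0\}$. I must check that all hypotheses of Theorem \ref{B-Y for torus} are met: $S(V)$ is compact, and since $V^G=\{0\}$ the unit sphere $S(V)$ is a fixed-point free $G$-space; moreover $S(V)$ is a sphere of real dimension $2\dim_{\mathbb{C}}V-1$, hence $(2\dim_{\mathbb{C}}V-2)$-connected, so $H^i(S(V);\mathbb{Q})=0$ for $0<i<n$ with $n=2\dim_{\mathbb{C}}V-1$. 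The target $W$ is a finite-dimensional real vector space, hence paracompact of finite covering dimension; it is a $G$-representation with $W^G=\{0\}$, so $W$ is fixed-point free and has only finitely many orbit types (a linear $G$-action on a finite-dimensional space has finitely many isotropy types). Finally $A=\{0\}$ is a closed $G$-invariant subspace of $W$.

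With these choices $f^{-1}(A)=f^{-1}(0)=Z_f$, and $Y\setminus\textnormal{Int}A = W\setminus\{0\}$. The set $W\setminus\{0\}$ deformation retracts $G$-equivariantly onto $S(W)$, so $\textnormal{cdim}(W\setminus\{0\})=\textnormal{cdim}\,S(W)=2\dim_{\mathbb{C}}W-1$, using again that $S(W)$ is a sphere of real dimension $2\dim_{\mathbb{C}}W-1$. Plugging $n=2\dim_{\mathbb{C}}V-1$ (so $\theta(n)=\dim_{\mathbb{C}}V$, since $n$ is odd) and $\textnormal{cdim}(Y\setminus\textnormal{Int}A)=2\dim_{\mathbb{C}}W-1$ (so $\theta_{-1}(2\dim_{\mathbb{C}}W-1)=\dim_{\mathbb{C}}W$, again since it is odd) into the conclusion of Theorem \ref{B-Y for torus} gives
\[ \theta_{-1}\big(\textnormal{cdim}\,f^{-1}(0)\big) \geq \dim_{\mathbb{C}}V - \dim_{\mathbb{C}}W, \]
which is the first displayed inequality.

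For the second inequality I would just unwind the definition of $\theta_{-1}$. Writing $d=\textnormal{cdim}\,f^{-1}(0)$, the inequality $\theta_{-1}(d)\geq \dim_{\mathbb{C}}V-\dim_{\mathbb{C}}W$ means $\lceil d/2\rceil \geq \dim_{\mathbb{C}}V-\dim_{\mathbb{C}}W$ (both parities of $d$ give $\theta_{-1}(d)=\lceil d/2\rceil$), hence $d\geq 2(\dim_{\mathbb{C}}V-\dim_{\mathbb{C}}W)-1$, as claimed. There is no serious obstacle here; the only point requiring a small amount of care is verifying that $W$ has finitely many orbit types and checking the parity bookkeeping for $\theta$ and $\theta_{-1}$ on odd arguments, together with the standard remark (as in Remark \ref{remark:interior}) that since $W$ is a separable metric space one may work with $W\setminus A$ rather than $W\setminus\textnormal{Int}A$ if desired, though it is not needed since $\{0\}$ has empty interior would be false—rather $\textnormal{Int}\{0\}=\emptyset$ so $W\setminus\textnormal{Int}A=W\setminus\emptyset=W$; to avoid this subtlety I would instead invoke Remark \ref{remark:interior} to replace $\textnormal{Int}A$ by $A$ directly, giving $Y\setminus A=W\setminus\{0\}$.
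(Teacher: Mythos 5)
You misidentify $W$ as fixed-point free: since $W^G=\{0\}$, the origin is a $G$-fixed point, so $W^G\neq\emptyset$ and $W$ is \emph{not} fixed-point free. Consequently Theorem~\ref{B-Y for torus} cannot be quoted verbatim with $Y=W$. (Indeed, for $k>1$ the proof of that theorem begins by applying Lemma~\ref{Z_p acting free} to $Y$, which requires $Y^G=\emptyset$; this fails for $Y=W$.) This is a real gap, not a cosmetic slip, because the hypothesis is used in two places that must be checked separately here.

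The corollary is nevertheless correct, and your plan can be repaired by unpacking the proof of Theorem~\ref{B-Y for torus}, or equivalently by using the observation made in Subsection~\ref{subsection:abstract_BY}. From Theorem~\ref{thm:general_B-Y} with $B=W\setminus\{0\}\simeq_G S(W)$ one gets $\ell(f^{-1}(0))\geq\ell(S(V))-\ell(S(W))$. Proposition~\ref{prop:ell_vs_dim_zp} must then be applied to $f^{-1}(0)$ and to $S(W)$, and these need to be fixed-point free for the circle subgroup in play. For $k=1$ this holds because $S(V)^G=\emptyset$ (so $f^{-1}(0)\subseteq S(V)$ is fixed-point free) and $S(W)^G=\emptyset$. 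For $k>1$, applying Lemma~\ref{Z_p acting free} to $S(W)$ alone gives $H\cong\mathbb{S}^1$ with $W^H=\{0\}$, but this is not enough: by equivariance every $H$-fixed point of $S(V)$ lands in $W^H=\{0\}$, so $f^{-1}(0)^H=S(V)^H$, which need not be empty. One must instead apply Lemma~\ref{Z_p acting free} to $S(V\oplus W)$ (compact, finitely many orbit types, and $S(V\oplus W)^G=\emptyset$ since $(V\oplus W)^G=\{0\}$), producing $H$ with $V^H=W^H=\{0\}$; then $f^{-1}(0)$ and $S(W)$ are both $H$-fixed-point free and Propositions~\ref{prop:ell_vs_dim_zp} and~\ref{prop:estimate from below2} apply as in the $k=1$ case. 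Your parity bookkeeping in the last paragraph, i.e.\ $\theta_{-1}(d)=\lceil d/2\rceil$ for all $d$ and hence $d\geq 2(\dim_{\mathbb C}V-\dim_{\mathbb C}W)-1$, is correct.
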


One can also derive the corresponding version of the Borsuk--Ulam theorem by using Lemma \ref{Z_p acting free} and proceeding analogously as in the proof of Theorem \ref{Borsuk-Ulam for cyclic}.

\begin{theorem}\label{thm:BU-torus}
Let $X$ and $Y$ be fixed-point free $G$-spaces, with $X$ such that \mbox{$H^i(X;\mathbb{Q})=0$} for $0<i<n$ and $Y$ paracompact of finite covering dimension and with finitely many orbit types. If $\theta(n) > \theta_{-1} (\textnormal{cdim}\,Y)$, then there is no $G$-equivariant map $X \to Y$.
\end{theorem}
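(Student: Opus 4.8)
The plan is to deduce Theorem \ref{thm:BU-torus} from Theorem \ref{B-Y for torus} in the same way that Theorem \ref{Borsuk-Ulam for cyclic} was deduced from Theorem \ref{thm:BY_cyclic_pk}, i.e. by a contrapositive argument. Suppose a $G$-equivariant map $f \colon X \to Y$ exists. First I would apply Theorem \ref{B-Y for torus} with the closed $G$-invariant subspace taken to be $A = \emptyset$ (this is legitimate since $Y$ is already assumed fixed-point free, so no condition of the form $Y^G \subseteq \textnormal{Int}A$ is required here), so that $f^{-1}(A) = \emptyset$ and $Y \setminus \textnormal{Int}A = Y$. The conclusion of Theorem \ref{B-Y for torus} then reads
\[ \theta_{-1}\big(\textnormal{cdim}\,\emptyset\big) \geq \theta(n) - \theta_{-1}\big(\textnormal{cdim}\,Y\big). \]

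The second step is to interpret the left-hand side. By the convention $H^n(X;\mathbb{F}) \neq 0$ defining $\textnormal{cdim}$, the empty space has $\textnormal{cdim}\,\emptyset = -\infty$ (or one argues directly: $\ell(\emptyset)$, and hence the relevant quantity, degenerates), so $\theta_{-1}\big(\textnormal{cdim}\,\emptyset\big)$ should be read as $-\infty$, or at the very least is bounded above by $0$ once one unwinds the definitions at the level of $\ell$ rather than $\textnormal{cdim}$. Concretely, rather than wrestling with the conventions for $\theta_{-1}$ of a negative argument, I would run the argument one level up: Theorem \ref{thm:general_B-Y} together with Propositions \ref{prop:ell_vs_dim_zp} and \ref{prop:estimate from below2} (after the reduction to $H \cong \mathbb{S}^1$ via Lemma \ref{Z_p acting free}, exactly as in the proof of Theorem \ref{B-Y for torus}) gives
\[ \ell(X) \leq \ell(Y), \qquad \ell(X) \geq \theta(n), \qquad \ell(Y) \leq \theta_{-1}(\textnormal{cdim}\,Y), \]
so that $\theta(n) \leq \theta_{-1}(\textnormal{cdim}\,Y)$, contradicting the hypothesis $\theta(n) > \theta_{-1}(\textnormal{cdim}\,Y)$. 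Hence no such $f$ exists. This mirrors the proof of Theorem \ref{Borsuk-Ulam for cyclic} line for line, with $\ell_s$ replaced by $\ell$ and the estimates of Theorem \ref{thm:ell_vs_dim_KG} and Proposition \ref{prop:highly_connected_KG} replaced by those of Propositions \ref{prop:ell_vs_dim_zp} and \ref{prop:estimate from below2}.

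The only genuine subtlety — and the step I expect to need the most care — is the reduction from $k > 1$ to $k = 1$. For this one needs both $X$ and $Y$ to be fixed-point free $H$-spaces for a common circle subgroup $H \subseteq G$. Lemma \ref{Z_p acting free} supplies such an $H$ with $Y^H = \emptyset$, using that $Y$ has finitely many orbit types and $Y^G = \emptyset$; since $f$ maps $X^H$ into $Y^H$, it follows that $X^H = \emptyset$ as well. One then views $f$ as an $H$-equivariant map and applies the $k = 1$ case. The one point to check is that the cohomological hypothesis $H^i(X;\mathbb{Q}) = 0$ for $0 < i < n$ is a property of $X$ as a space, independent of the group action, so it is inherited by $X$ regarded as an $H$-space; and likewise the dimension and paracompactness hypotheses on $Y$ are preserved. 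Everything else is the same bookkeeping as in Theorem \ref{B-Y for torus}, so I would keep the write-up terse, referring back to that proof and to the proof of Theorem \ref{Borsuk-Ulam for cyclic}.
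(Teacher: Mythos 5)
Your second-paragraph argument is exactly what the paper intends: reduce to a circle subgroup $H \subseteq G$ with $Y^H = \emptyset$ via Lemma \ref{Z_p acting free} (whence $X^H = \emptyset$ as well, since $f(X^H) \subseteq Y^H$), then combine monotonicity of the $H$-length with Propositions \ref{prop:estimate from below2} and \ref{prop:ell_vs_dim_zp} to obtain $\theta(n) \leq \ell(X) \leq \ell(Y) \leq \theta_{-1}(\textnormal{cdim}\,Y)$, contradicting the hypothesis. Your first-paragraph detour through Theorem \ref{B-Y for torus} with $A=\emptyset$ was rightly abandoned — besides the $\textnormal{cdim}\,\emptyset$ awkwardness, that theorem also assumes $X$ compact, which Theorem \ref{thm:BU-torus} does not.
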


Using the same notation as above, \cite[Theorem 6.4]{Clapp-Puppe} states that if $n > \textrm{cdim}\,Y$, then there is no $G$-equivariant map $X \to Y$. Theorem \ref{thm:BU-torus} sharpens this result in the following sense. Depending on the values of $n$ and $\textrm{cdim}\,Y$, unveiling $\theta$'s yields:\medskip

\begin{center}
\begin{tabular}{|cc|cc|}
\hline
& & \multicolumn{2}{c|}{$n$}\cr
& & \textnormal{odd} & \textnormal{even} \cr\hline
\multirow{2}{*}{$\textrm{cdim}\,Y$} & odd & $n > \textrm{cdim}\,Y\phantom{{}-1}$ & $n>\textrm{cdim}\,Y -1$  \cr
& even & $n>\textrm{cdim}\,Y -1$ & $n>\textrm{cdim}\,Y -2$ \cr\hline
\end{tabular}
\end{center}\medskip

\noindent Therefore, if we e.g. know \textit{a priori} that $n$ is even, we can (slightly) relax the assumption to $n>\textrm{cdim}\,Y -1$ and still infer the non-existence of a $G$-equivariant map $X \to Y$.

\section{A Bourgin--Yang theorem for $(\mathbb{Z}_p)^k$}

\subsection{General results} This case is significantly more difficult than the previous ones. It does not succumb to a reduction similar to the one used in Section \ref{section:B-Y_torus}, and we also cannot hope for a satisfactory upper bound on $\ell(X)$ in terms of dimension of $X$.

\begin{example}\label{ex:p-torus_bad_behaviour}
Let $H \subseteq G$ be a sub-$p$-torus of rank $k-1$. It is clear that for any $K \subsetneq G$, $K\neq H$, there exists $\alpha \in I \cap \ker\!\big[H_G^*(\textnormal{pt}) \to H_G^*(G/K)\big]$ such that $(p_{G/H})^*(\alpha)\neq 0$. Since $(p_{X\sqcup Y})^* = (p_X^*, p_Y^*)$, this shows that for any family $H_1$, \ldots, $H_n \subseteq G$ of distinct sub-$p$-tori we have $\ell\big(\!\bigsqcup_{i=1}^n G/H_i\big)= n$, but $\dim\!\big(\!\bigsqcup_{i=1}^n G/H_i\big) =0$.
\end{example}

This sort of behaviour is triggered by the fact that the length $\ell$ we work with in this case does not satisfy the so called ``strong normalization property''. We briefly discuss this in Subsection \ref{subsect:strong_normalization}. In general, there is the following upper bound:
\[ \ell(X) \leq (\dim X + 1)\max_{H\subsetneq G} c(H), \]
where $c(H)$ is the number of connected components of $X^H\!/G$, see \cite[Proposition 4.2]{Bartsch}. However, this is far from being useful from our point of view. Therefore we state a Bourgin--Yang type theorem under an additional assumption.

\begin{proposition}\label{B-Y for spaces with the least orbit type}
Let $X$ and $Y$ be $G$-spaces, with $X$ compact and \mbox{$H^i(X;\mathbb{Z}_p)=0$} for $0<i<n$, and $Y$ paracompact of finite covering dimension. If there exists a subgroup $H\subseteq G$, $H\cong \mathbb{Z}_p$, such that $Y$ is a fixed-point free $H$-space, then for any $G$-equivariant map $f \colon X \to Y$ one has:
\begin{enumerate}[leftmargin=1.5cm]
\item[\textnormal{(}$p=2$\textnormal{)}] $\textnormal{cdim}\,f^{-1}(A) \geq n - \textnormal{cdim}(Y \setminus\textnormal{Int}A) -1$,
\item[\textnormal{(}$p>2$\textnormal{)}] $\theta\big(\textnormal{cdim} f^{-1}(A)\big) \geq \theta(n) - \theta\big(\textnormal{cdim}(Y \setminus \textnormal{Int}A)\big)$,
\end{enumerate}
where $A \subseteq Y$ is a closed $G$-invariant subspace.
\end{proposition}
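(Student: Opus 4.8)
The plan is to mimic the proof of Theorem \ref{thm:BY_Zp}, feeding Theorem \ref{thm:general_B-Y} with appropriate upper and lower bounds on the $\ell$-length that come from Propositions \ref{prop:ell_vs_dim_zp} and \ref{prop:estimate from below2}. The subtlety is that when $k>1$ the length we use is the $(\mathcal{A}, H_G^*, I)$-length with $\mathcal{A}=\{G/K \mid K\subsetneq G\}$, and Proposition \ref{prop:ell_vs_dim_zp} was stated (and, per the remark following it, only holds) for $k=1$. So the first step is a \emph{restriction of the acting group}: let $H\subseteq G$ with $H\cong\mathbb{Z}_p$ be the subgroup for which $Y$ is a free $H$-space, and regard $f\colon X\to Y$ as an $H$-equivariant map. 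Since $Y$ is a free $H$-space, so is $X$ (via $f$), hence in particular both are fixed-point free $H$-spaces, which is exactly the hypothesis needed to invoke Proposition \ref{prop:ell_vs_dim_zp} for the rank-one group $H$.

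Concretely, I would write $\ell_H$ for the $(\{H\}, H_H^*(-;\mathbb{Z}_p), I_H)$-length, where $I_H$ is $H_H^*(\mathrm{pt};\mathbb{Z}_2)$ for $p=2$ and $(c_1)$ for $p>2$ — i.e.\ the length appearing in Theorem \ref{thm:BY_Zp}. Now apply the $(H_G^*)$-case of Theorem \ref{thm:general_B-Y}, over the group $H$, to the $H$-map $f\colon X\to Y$, the closed $H$-invariant set $A$, and $B = Y\setminus A$ (or, to be safe about paracompactness of $Y\setminus A$, $B=Y\setminus\mathrm{Int}\,A$ together with the reduction in Remark \ref{remark:interior}; this is why $\mathrm{Int}\,A$ appears in the statement). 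This gives
\[
\ell_H\big(f^{-1}(A)\big) \;\geq\; \ell_H(X) - \ell_H\big(Y\setminus\mathrm{Int}\,A\big).
\]
For the lower bound on $\ell_H(X)$: the cohomological hypothesis $H^i(X;\mathbb{Z}_p)=0$ for $0<i<n$ is a statement about $X$ that does not reference the group action at all, so Proposition \ref{prop:estimate from below2} applies to $X$ as an $H$-space and yields $\ell_H(X)\geq n+1$ if $p=2$ and $\ell_H(X)\geq\theta(n)$ if $p>2$. For the upper bound on $\ell_H\big(Y\setminus\mathrm{Int}\,A\big)$: this is a closed subspace of the paracompact, finite-covering-dimension, fixed-point-free $H$-space $Y$ (closed since $\mathrm{Int}\,A$ is open), hence itself fixed-point free — wait, one must check it inherits these properties — a closed subspace of a paracompact space of finite covering dimension is again such, and freeness of the $H$-action restricts to any invariant subspace; so Proposition \ref{prop:ell_vs_dim_zp} gives $\ell_H\big(Y\setminus\mathrm{Int}\,A\big)\leq \mathrm{cdim}(Y\setminus\mathrm{Int}\,A)+1$ for $p=2$ and $\leq\theta\big(\mathrm{cdim}(Y\setminus\mathrm{Int}\,A)\big)$ for $p>2$. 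Similarly $\ell_H\big(f^{-1}(A)\big)\leq\mathrm{cdim}\,f^{-1}(A)+1$ resp.\ $\leq\theta\big(\mathrm{cdim}\,f^{-1}(A)\big)$, again because $f^{-1}(A)$ is a closed fixed-point-free $H$-subspace of the (compact, hence paracompact finite-dimensional) space $X$.

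Assembling the three bounds: for $p=2$,
\[
\mathrm{cdim}\,f^{-1}(A) + 1 \;\geq\; \ell_H\big(f^{-1}(A)\big) \;\geq\; (n+1) - \big(\mathrm{cdim}(Y\setminus\mathrm{Int}\,A)+1\big),
\]
so $\mathrm{cdim}\,f^{-1}(A)\geq n - \mathrm{cdim}(Y\setminus\mathrm{Int}\,A) - 1$; for $p>2$,
\[
\theta\big(\mathrm{cdim}\,f^{-1}(A)\big) \;\geq\; \theta(n) - \theta\big(\mathrm{cdim}(Y\setminus\mathrm{Int}\,A)\big),
\]
which are exactly the claimed inequalities. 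The main obstacle — and the only place the argument is genuinely different from Theorem \ref{thm:BY_Zp} — is the group-restriction step: one has to be careful that Theorem \ref{thm:general_B-Y} is being applied over $H$ rather than $G$ (this is legitimate: an $H$-equivariant map is all that theorem needs, and restricting the $G$-action to $H$ does not affect paracompactness or compactness), and that the hypothesis "$Y$ is a free $H$-space" is precisely what replaces "$Y$ is fixed-point free" in the rank-one case and propagates to $f^{-1}(A)$ and $Y\setminus\mathrm{Int}\,A$. Unlike the torus case in Section \ref{section:B-Y_torus}, we cannot get freeness on all of $Y$ for free from finiteness of orbit types (there is no analogue of Lemma \ref{Z_p acting free} producing a \emph{free} $\mathbb{Z}_p$-subaction from a merely fixed-point-free $(\mathbb{Z}_p)^k$-action — cf.\ Example \ref{ex:p-torus_bad_behaviour}), which is exactly why the hypothesis has to be imposed by hand.
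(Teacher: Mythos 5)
Your proof is correct and follows the same route as the paper: restrict the action to the subgroup $H\cong\mathbb{Z}_p$, observe that freeness of $Y$ as an $H$-space forces $X$ to be $H$-fixed-point free as well, and then the statement is exactly Theorem~\ref{thm:BY_Zp} applied to the $H$-equivariant map $f$ (with $Y^H=\emptyset\subseteq\textnormal{Int}\,A$ automatic). The only cosmetic difference is that the paper cites Theorem~\ref{thm:BY_Zp} directly, whereas you unfold its proof by assembling Theorem~\ref{thm:general_B-Y} with Propositions~\ref{prop:ell_vs_dim_zp} and~\ref{prop:estimate from below2}.
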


\begin{proof}
Note that both spaces are fixed-point free with respect to the $H$-action and it suffices to apply Theorem \ref{thm:BY_Zp}.
\end{proof}

\begin{remark}
The paper by Marzantowicz, de Mattos and dos Santos \cite{MMS2}, published when this work was already complete, contains a version of the Bourgin--Yang theorem for maps between spheres in orthogonal $G$-representations without an assumption of the existence of the subgroup $H$. The methods used therein are different.
\end{remark}

We can also estimate the size of a coincidence set in the following setting:

\begin{proposition}\label{coincidence_zpk}
If $X$ is a free $G$-space which is compact and such that $H^i(X;\mathbb{Z}_p)=0$ for $0<i<n$, then for any map $f \colon X \to \mathbb{R}^m$ one has:
\begin{enumerate}[leftmargin=1.5cm]
\item[\textnormal{(}$p=2$\textnormal{)}] $\dim A_f \geq n-(2^k-1)m$,
\item[\textnormal{(}$p>2$\textnormal{)}] $\theta(\dim A_f) \geq \theta(n)- (p^k-1)m/2$.
\end{enumerate}
\end{proposition}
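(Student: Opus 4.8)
The plan is to reduce the coincidence problem to a Bourgin--Yang situation as described in Subsection~\ref{subsection:coincidence} and then invoke Proposition~\ref{B-Y for spaces with the least orbit type}. First I would take $G=(\mathbb{Z}_p)^k$ with a fixed enumeration $\{e,g_1,\ldots,g_r\}$ of its $p^k$ elements, form the orthogonal $G$-representation $W=\bigoplus_{i=1}^{p^k}\mathbb{R}^m$ with the permutation-type action described there, and consider the $G$-map $\bar f=\pi^G\circ\tilde f\colon X\to {\perp}W^G$, which satisfies $Z_{\bar f}=A_f$ and has $\dim_{\mathbb{R}}{\perp}W^G=(p^k-1)m$. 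Since $W^G=\Delta W$, the complement ${\perp}W^G$ is a free $G$-representation, so ${\perp}W^G\setminus\{0\}$ has a single orbit type $(e)$; in particular, picking any subgroup $H\subseteq G$ with $H\cong\mathbb{Z}_p$, the space $Y={\perp}W^G$ is a free $H$-space away from the origin. To apply Proposition~\ref{B-Y for spaces with the least orbit type} with $A=\{0\}$ I need $Y$ to be a free $H$-space; strictly speaking the origin is fixed, but as in the proofs of Theorem~\ref{thm:BY_spheres_z_pk} and Theorem~\ref{B-Y for torus} this is handled by replacing $W\setminus\{0\}$ with the $G$-homotopy equivalent $S({\perp}W^G)$, or equivalently by invoking Remark~\ref{remark:interior} so that $\textnormal{Int}A$ may be taken to be $A=\{0\}$ (here $Y$ is a separable metric space).

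Next I would feed the data into Proposition~\ref{B-Y for spaces with the least orbit type}: $X$ is compact with $H^i(X;\mathbb{Z}_p)=0$ for $0<i<n$ and fixed-point free (indeed free, hence fixed-point free for the $H$-action), and $Y\setminus\textnormal{Int}A = {\perp}W^G\setminus\{0\}$ has covering dimension $(p^k-1)m$, so $\textnormal{cdim}(Y\setminus\textnormal{Int}A)\leq (p^k-1)m$. For $p=2$ the proposition gives $\textnormal{cdim}\,A_f=\textnormal{cdim}\,Z_{\bar f}\geq n-(2^k-1)m-1$; but one can do slightly better here because $S({\perp}W^G)$ is a sphere of dimension exactly $(2^k-1)m-1$, so its $\textnormal{cdim}$ is $(2^k-1)m-1$, yielding $\textnormal{cdim}\,A_f\geq n-(2^k-1)m$. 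For $p>2$ the proposition gives $\theta(\textnormal{cdim}\,A_f)\geq\theta(n)-\theta\big(\textnormal{cdim}(Y\setminus\{0\})\big)$, and again using $\textnormal{cdim}\,S({\perp}W^G)=(p^k-1)m-1$ with $\theta\big((p^k-1)m-1\big)=(p^k-1)m/2$ (note $(p^k-1)m$ is even since $p^k-1$ is) gives $\theta(\textnormal{cdim}\,A_f)\geq\theta(n)-(p^k-1)m/2$. Finally, $\textnormal{cdim}\,A_f\leq\dim A_f$ and, since $\theta$ is nondecreasing, $\theta(\dim A_f)\geq\theta(\textnormal{cdim}\,A_f)$, which converts the cohomological-dimension bounds into the stated covering-dimension bounds.

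The main obstacle I anticipate is the bookkeeping around the origin and the passage from $W\setminus\{0\}$ to $S({\perp}W^G)$: Proposition~\ref{B-Y for spaces with the least orbit type} is stated for $Y$ a \emph{free} $H$-space, whereas ${\perp}W^G$ has a fixed point at $0$, so one must either restrict to $S({\perp}W^G)$ (and then relate $f^{-1}(0)$ to a preimage of a sphere via the standard equivariant deformation, exactly as in the proof of Theorem~\ref{thm:BY_spheres_z_pk}) or carefully justify the use of $\textnormal{Int}A$ via Remark~\ref{remark:interior}. A secondary point requiring a little care is squeezing the extra $+1$ in the $p=2$ case and the exact value $(p^k-1)m/2$ in the $p>2$ case out of the dimension of the sphere rather than the disk; this hinges only on the parity observation that $(p^k-1)m$ is even and on $\textnormal{cdim}\,S^{d}=d$, so it is routine once set up correctly. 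Everything else is a direct substitution into results already established in the excerpt.
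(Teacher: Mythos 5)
Your argument hinges on the claim that ``since $W^G=\Delta W$, the complement ${\perp}W^G$ is a free $G$-representation,'' so that you can pick any $H\subseteq G$ with $H\cong\mathbb{Z}_p$ and treat ${\perp}W^G$ as a free $H$-space. This is false as soon as $k>1$. Indeed, $W\cong\mathbb{R}[G]^m$ is $m$ copies of the regular representation, and for any $H\cong\mathbb{Z}_p$ the restriction $\mathbb{R}[G]|_H$ is $p^{k-1}$ copies of $\mathbb{R}[\mathbb{Z}_p]$, so $\dim W^H = mp^{k-1} > m = \dim W^G$; hence $({\perp}W^G)^H = W^H\cap{\perp}W^G \neq\{0\}$. (Concretely, for $G=(\mathbb{Z}_2)^2$ and $m=1$, ${\perp}W^G$ is the sum of the three nontrivial characters, each fixed by one of the three order-two subgroups.) Thus the hypothesis of Proposition~\ref{B-Y for spaces with the least orbit type} --- that $Y$ be a free $H$-space for some $H\cong\mathbb{Z}_p$ --- is not available, and the route you chose collapses for $k\geq 2$.

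The paper sidesteps this by working directly with the $(\mathbb{Z}_p)^k$-length $\ell$ rather than reducing to a rank-one subgroup. The key input you are missing is that the proposition explicitly assumes $X$ (hence $A_f$) is a \emph{free} $G$-space: this gives $EG\times_G A_f\simeq A_f/G$, so $\ell(A_f)\leq \textnormal{cdim}\,A_f/G+1\leq\dim A_f/G+1\leq\dim A_f+1$ for $p=2$ (and the $\theta$-variant for $p>2$), using \cite[Proposition 3.7]{Deo-Tripathi}. Combined with Theorem~\ref{thm:general_B-Y}, Proposition~\ref{prop:estimate from below2}, and the exact value $\ell\big(S({\perp}W^G)\big)=\dim{\perp}W^G$ for $p=2$ (respectively $\tfrac12\dim{\perp}W^G$ for $p>2$) from \cite[Theorem 5.2]{Bartsch}, this yields the claimed bounds. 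Your approach would work for $k=1$, where ${\perp}W^G$ genuinely is a free $\mathbb{Z}_p$-representation, but it does not extend to general $k$; the freeness of the action on $X$ itself, not of the target representation, is what drives the argument.
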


\begin{proof}
($p=2$) The argument in Subsection \ref{subsection:coincidence} yields a $G$-map \mbox{$\bar{f} \colon X \to {\perp}W^G$}, where ${\perp}W^G$ is a fixed-point free $G$-representation of dimension $(2^k-1)m$. By Theorem \ref{thm:general_B-Y} and Proposition \ref{prop:estimate from below2},
\[ \ell(A_f) \geq n+1 - \ell\big(S({\perp}W^G)\big). \]
But $A_f$ is a free $G$-space, hence $EG \times_G A_f \simeq A_f/G$, so that \mbox{$\ell(A_f) \leq \textnormal{cdim}\,A_f/G+1$}, and $\textnormal{cdim}\,A_f/G \leq \dim A_f/G \leq \dim A_f$, where the last inequality is a consequence of \mbox{\cite[Proposition 3.7]{Deo-Tripathi}}. On the other hand, $\ell\big(S({\perp}W^G)\big) = \dim {\perp}W^G = (2^k-1)m$ in view of \cite[Theorem~5.2]{Bartsch}.

($p>2$) The argument is similar, with two minor modifications: plug in $\theta$ wherever necessary, and in this case $\ell\big(S({\perp}W^G)\big) = \frac{1}{2}\dim {\perp}W^G = \frac{1}{2}(p^k-1)m$ (\textit{ibid.}).
\end{proof}

\subsection{A ``Bourgin--Yang version'' of the topological Tverberg theorem for prime powers}

The reader is referred to a recent survey by Blagojevi\'c and Ziegler \cite{BZ} for more information on the topological Tverberg theorem.

In what follows, $\Delta_N$ denotes the standard $N$-dimensional simplex.

\begin{theorem}\label{thm:BYTverberg}
Let $k$, $m \geq 1$ be integers, $p$ a prime, and $N\geq (m+1)(p^k-1)$. If $f\colon \Delta_N \to \R^m$ is a continuous map, then there exist $p^k$ pairwise disjoint faces \mbox{$\sigma_1$, \ldots, $\sigma_{p^k}$} of $\Delta_N$ such that the set $C=\big\{(x_1, \ldots, x_{p^k}) \in \sigma_1 \times \cdots \times \sigma_{p^k} \,\big|\, f(x_1) = \cdots = f(x_{p^k})\big\}$ satisfies:
\begin{enumerate}[leftmargin=1.5cm]
\item[\textnormal{(}$p=2$\textnormal{)}] $\dim C \geq N-(m+1)(2^k-1)$,
\item[\textnormal{(}$p>2$\textnormal{)}] $\theta(\dim C) \geq \theta(N-p^k+1)-(p^k-1)m/2$.
\end{enumerate}
In particular, in both cases, $f(\sigma_1) \cap \cdots \cap f(\sigma_{p^k}) \neq \emptyset$.
\end{theorem}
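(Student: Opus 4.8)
The plan is to reduce the topological Tverberg problem to the coincidence-set estimate of Proposition \ref{coincidence_zpk} by means of the standard deleted-join (Bárány--Gromov--type) construction. The group acting is $G = (\mathbb{Z}_p)^k$, which acts on the $p^k$ factors by the regular representation of the elementary abelian group on itself, permuting the summands.

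First I would recall the configuration space: let $(\Delta_N)^{*p^k}_{\Delta}$ be the $p^k$-fold deleted join of the simplex, i.e.\ the subcomplex of $(\Delta_N)^{*p^k}$ consisting of joins $t_1 x_1 * \cdots * t_{p^k} x_{p^k}$ where $x_i$ lie in pairwise disjoint faces of $\Delta_N$. This is well known (and I would cite \cite{BZ}) to be $G$-homeomorphic to the join $(E_{p^k})^{*(N+1)}$ of $N+1$ copies of the regular $G$-set $E_{p^k}$ of order $p^k$ (equivalently, an iterated deleted join of $p^k$ points), so it is a free $G$-space; moreover it is $(N-p^k+1)$-connected — more precisely it has the homotopy type of a wedge of spheres of dimension $N - p^k + 1$, so $H^i = 0$ for $0 < i < N - p^k + 2$, i.e.\ $n = N - p^k + 2$ in the language of Proposition \ref{coincidence_zpk}. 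Actually, to match the statement cleanly I would use $n = N-p^k+1$ together with the reduced connectivity; I would double-check the exact indexing against \cite{BZ} when writing the full proof. The given map $f$ induces the affine-extension map $\tilde f \colon (\Delta_N)^{*p^k}_{\Delta} \to (\mathbb{R}^m)^{*p^k}$ sending $t_1 x_1 * \cdots * t_{p^k} x_{p^k} \mapsto t_1 f(x_1) * \cdots * t_{p^k} f(x_{p^k})$, where the target join is viewed inside $\mathbb{R}^{(m+1)p^k}$ with the permutation $G$-action; this map is $G$-equivariant.

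Next, I would identify the coincidence set. A point of $(\Delta_N)^{*p^k}_{\Delta}$ lying in the ``thick diagonal'' — where $t_1 = \cdots = t_{p^k} = 1/p^k$ and $f(x_1) = \cdots = f(x_{p^k})$ — is exactly a Tverberg partition point, so the set $C$ of the theorem is precisely (a fiber over the diagonal of) the coincidence set $A_{\tilde f}$ of $\tilde f$ in the sense of Subsection \ref{subsection:coincidence}, up to the contractible ``barycentric coordinate'' factor, which does not change the (covering) dimension estimate. Applying Proposition \ref{coincidence_zpk} with $X = (\Delta_N)^{*p^k}_{\Delta}$, $n = N - p^k + 1$ (with the appropriate vanishing range), and ambient Euclidean target $\mathbb{R}^{m+1}$ — note the target of each coordinate is $\mathbb{R}^{m+1}$, not $\mathbb{R}^m$, because the join coordinate contributes one extra dimension — gives, for $p=2$, $\dim A_{\tilde f} \geq (N - 2^k + 1) - (2^k-1)(m+1)$, and rearranging $N - 2^k + 1 - (2^k-1)(m+1) = N - (m+1)(2^k-1) - (2^k - 1) + \ldots$; I will need to reconcile this with the stated bound $N - (m+1)(2^k-1)$, which suggests the correct bookkeeping uses the deleted join of $N+1$ points giving connectivity one higher, so that the ``$+1$'' from the join coordinate and the connectivity shift cancel. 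For $p>2$ one feeds the same data through the $\theta$-version of Proposition \ref{coincidence_zpk}.

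The main obstacle I anticipate is precisely this accounting of constants: getting the connectivity of the deleted join, the dimension of the target representation ($m$ versus $m+1$), and the ``$\theta$'' rounding to all line up so that the output is exactly $N - (m+1)(2^k-1)$ (resp.\ the $\theta$ inequality), rather than off by a small additive constant. I would handle it by working with the reduced statement — $(\Delta_N)^{*p^k}_\Delta \simeq (E_{p^k})^{*(N+1)}$ is a wedge of $(N-p^k+1)$-spheres, hence $(N-p^k)$-connected, so $H^i = 0$ for $0 < i \le N - p^k$ — and by noting that restricting $\tilde f$ to the subspace where all barycentric coordinates equal $1/p^k$ (a copy of $(\Delta_N)^{\times p^k}_\Delta$ with its own connectivity) lets one work with target $\mathbb{R}^m$ directly, avoiding the spurious $+1$. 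Finally, the last sentence, $f(\sigma_1) \cap \cdots \cap f(\sigma_{p^k}) \neq \emptyset$, is immediate: under the hypothesis $N \geq (m+1)(p^k-1)$ the right-hand side of each displayed inequality is $\geq 0$ (for $p=2$ directly; for $p > 2$ because $\theta(\dim C) \geq 1$ forces $\dim C \geq 0$, i.e.\ $C \neq \emptyset$), so $C$ is non-empty and any point of it witnesses a common value of $f$ on the chosen faces.
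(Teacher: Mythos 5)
Your overall strategy (configuration space plus the coincidence-set estimate of Proposition~\ref{coincidence_zpk}) is the right one, and your instinct that the two ``$+1$'' shifts should cancel is correct, but the proposal as written has two genuine problems.

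First, the connectivity arithmetic is wrong, and you have attributed the numbers for the deleted \emph{product} to the deleted \emph{join}. The join $(E_{p^k})^{*(N+1)}$ of $N+1$ discrete $p^k$-point sets is a wedge of \emph{$N$-spheres}, hence $(N-1)$-connected of dimension $N$ --- not a wedge of $(N-p^k+1)$-spheres. The space that is $(N-p^k+1)$-dimensional and $(N-p^k)$-connected is the $p^k$-fold $2$-wise deleted \emph{product} $(\Delta_N)^{p^k}_{\Delta(2)}$, which is in fact what the paper uses, feeding it into Proposition~\ref{coincidence_zpk} with $n = N-p^k+1$ and target $\mathbb{R}^m$ via the map $f_1(x_1,\ldots,x_{p^k}) = f(x_1)$. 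Your deleted-join route \emph{would} also close up if you used the correct data ($n=N$, target $\mathbb{R}^{m+1}$): for $p=2$ this gives $\dim \geq N-(2^k-1)(m+1)$, and for $p>2$ one checks $\theta(N)-\theta(N-p^k+1)=(p^k-1)/2$ so the bound $\theta(N)-(p^k-1)(m+1)/2$ collapses to the stated one. But you oscillate between the join (with the wrong numbers) and retreating to the diagonal slice (which is just the deleted product), rather than committing to either.

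Second, you have not addressed the reduction from a large coincidence set in the whole configuration space to a large set $C$ inside a \emph{single} product of faces $\sigma_1\times\cdots\times\sigma_{p^k}$, which is what the theorem asserts. The coincidence set $A_{f_1}$ is a union $\bigcup A_{f_1}\cap(\sigma_1\times\cdots\times\sigma_{p^k})$ over the closed cells of the deleted product; one then needs the sum theorem for covering dimension to conclude that at least one of these pieces already has dimension $\geq N-(m+1)(p^k-1)$ (resp.\ satisfies the $\theta$-inequality). Calling $C$ ``a fiber over the diagonal of the coincidence set'' blurs exactly this step. The final non-emptiness claim also needs a small parity check for $p>2$ --- unveiling $\theta$ shows $\dim C$ could in principle sit one below $N-(m+1)(p^k-1)$, but in the offending parity case $N$ is forced odd so the bound is still $\geq 1$.
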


\begin{proof}
Consider the $p^k$-fold $2$-wise \textit{deleted product} $\mathbf{\Delta}=(\Delta_N)^{p^k}_{\Delta(2)}$ of $\Delta_N$, i.e.,
\[ \mathbf{\Delta}=\big\{(x_1, \ldots, x_{p^k}) \in \sigma_1 \times \cdots \times \sigma_{p^k} \subseteq (\Delta_N)^{p^k} \,|\, \sigma_i \cap \sigma_j = \emptyset \textnormal{ for all $i \neq j$}\big\}. \]
It is well-known that $\mathbf{\Delta}$ is an $(N-p^k+1)$-dimensional $(N-p^k)$-connected CW complex (see e.g. \cite[Theorem 3.4]{BZ}). Equip it with a $G$-action analogously as in Subsection \ref{subsection:coincidence} and consider a map $f_1 \colon \mathbf{\Delta} \to \R^m$ given by $f_1(x_1,\ldots, x_{p^k}) = f(x_1)$. If $p=2$, Proposition \ref{coincidence_zpk} immediately implies that $\dim A_{f_1} \geq N-(m+1)(p^k-1)$. Since $\sigma_1 \times \cdots \times \sigma_{p^k}$ is closed in $(\Delta_N)^{p^k}$ for any collection $\sigma_1$, \ldots, $\sigma_{p^k}$ of faces of $\Delta_N$ and \[ A_{f_1} = \bigcup A_{f_1}\cap \sigma_1 \times \cdots \times \sigma_{p^k}, \]
where the union is taken over all $\sigma_1 \times \cdots \times \sigma_{p^k} \subseteq \mathbf{\Delta}$, it follows that
\[ \dim A_{f_1} \cap \sigma_1 \times \cdots \times \sigma_{p^k} \geq N-(m+1)(p^k-1) \]
for some $\sigma_1 \times \cdots \times \sigma_{p^k} \subseteq \mathbf{\Delta}$. These are the searched for faces of $\Delta_N$, and the set $C$ is exactly $A_{f_1} \cap \sigma_1 \times \cdots \times \sigma_{p^k}$. In particular, $C$ is non-empty, hence $f(\sigma_1) \cap \cdots \cap f(\sigma_{p^k}) \neq \emptyset$.

If $p>2$, the same argument shows that
\[ \theta(\dim A_{f_1} \cap \sigma_1 \times \cdots \times \sigma_{p^k}) \geq \theta(N-p^k+1) - (p^k-1)m/2 \]
for some  $\sigma_1 \times \cdots \times \sigma_{p^k} \subseteq \mathbf{\Delta}$. Unveiling $\theta$ reveals that $\dim C \geq 0$, so that $C$ is non-empty in this case as well. Indeed, if $d = N - (m+1)(p^k-1)$, then
\[
\dim C \geq
\begin{cases}
d+1, & \textnormal{$\dim C$ is odd and $N-p^k+1$ is even,}\\
d, & \textnormal{$\dim C$ and $N-p^k+1$ are of the same parity,}\\
d-1, & \textnormal{$\dim C$ is even and $N-p^k+1$ is odd.}
\end{cases}
\]
In the last case, however, $N$ is necessarily an odd integer; in particular, $d\geq 1$.
\end{proof}

\subsection{Remark on the strong normalization property}\label{subsect:strong_normalization}

In this subsection $G$ stands for a compact Lie group, $\mathcal{A} \subseteq \{ G/H \,|\, H \subsetneq G \textnormal{ is a closed subgroup}\}$ and $h^*$ denotes a continuous and multiplicative equivariant cohomology theory. \textit{Continuity} of $h^*$ means that for any closed $G$-invariant subspace $A$ of a compact $G$-space $X$ we have $h^*(A)=\varinjlim h^*(\mathcal{U})$, where the direct limit is taken over all open $G$-neighbourhoods $\mathcal{U}$ of $A$ in $X$. Furthermore, $\ell$ stands for the $(\mathcal{A}, h^*, I)$-length for some choice of an ideal $I \subseteq h^*(\textnormal{pt})$.

The length $\ell$ is said to have the \textit{strong normalization property} provided that
\[ \ell(A_1 \sqcup \cdots \sqcup A_k)=1 \textnormal{ for any $k\geq 1$ and any $A_1$, \ldots, $A_k \in \mathcal{A}$.} \]
Clearly, the length we discussed for $G=(\mathbb{Z}_p)^k$ lacks strong normalization. The following result sheds some light on why this is troublesome from the point of view of our approach. (We omit its proof as it is quite technical and not really relevant to the main theme of this paper.)

\begin{proposition}
If $\ell$ has the strong normalization property, then \mbox{$\ell(X) \leq \dim X+1$} for any compact $G$-space $X$ of finite orbit type such that $\mathcal{A}_X \subseteq \mathcal{A}$. If $G$ is a finite group, the converse also holds.
\end{proposition}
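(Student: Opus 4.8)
The plan is to prove the two implications separately, the forward one being the substantial part. I would dispose of the converse first. Assume $\ell(X)\le\dim X+1$ for every compact $G$-space $X$ of finite orbit type with $\mathcal{A}_X\subseteq\mathcal{A}$, and let $A_1,\ldots,A_k\in\mathcal{A}$ be arbitrary. Since $G$ is finite, each $A_i$ is a finite discrete $G$-space, so $X=A_1\sqcup\cdots\sqcup A_k$ is compact, zero-dimensional, has at most $k$ orbit types, and $\mathcal{A}_X=\{A_1,\ldots,A_k\}\subseteq\mathcal{A}$. The hypothesis gives $\ell(A_1\sqcup\cdots\sqcup A_k)\le\dim X+1=1$, and since $\ell\ge 1$ by definition, equality holds; this is exactly the strong normalization property. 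Finiteness of $G$ is essential here: for positive-dimensional $G$ a disjoint union of orbits has positive covering dimension and the bound is vacuous.

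For the forward implication, fix a compact $G$-space $X$ of finite orbit type with $\mathcal{A}_X\subseteq\mathcal{A}$ and put $d=\dim X$. Note $\mathcal{A}_X\subseteq\mathcal{A}\subseteq\{G/H\mid H\subsetneq G\}$ forces $X$ to be fixed-point free. By the slice theorem every $x\in X$ has a tube $T_x\cong G\times_{G_x}S_x$, and by compactness finitely many tubes $T_1,\ldots,T_N$ cover $X$, with $T_j=G\times_{G_{x_j}}S_j$. Each $T_j$ carries the $G$-equivariant projection $T_j\to G\times_{G_{x_j}}\mathrm{pt}=G/G_{x_j}$, where $G/G_{x_j}\in\mathcal{A}_X\subseteq\mathcal{A}$; crucially, every $G$-invariant subspace of $T_j$ inherits such a $G$-map to $G/G_{x_j}$, so no contractibility of slices is needed. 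Passing to orbit spaces, $\{\bar T_j\}$ is a finite open cover of the normal space $X/G$, and $\dim X/G\le\dim X=d$. By the decomposition theorem for covering dimension I would refine $\{\bar T_j\}$ to a finite open cover of $X/G$ of the form $\mathcal{B}_0\cup\cdots\cup\mathcal{B}_d$, where each $\mathcal{B}_i$ is a finite family of pairwise disjoint open sets, each contained in some $\bar T_j$. Pulling back under the orbit map $X\to X/G$ yields a $G$-invariant open cover of $X$ by sets $\tilde W_\alpha$ (each inside some $T_j$), organized into $d+1$ subfamilies indexed by $\alpha\in C_i$, $0\le i\le d$, with the members of each subfamily pairwise disjoint.

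Now set $\tilde V_i=\bigsqcup_{\alpha\in C_i}\tilde W_\alpha$. Iterating property (2) of Theorem \ref{ell_properties} over the open cover $X=\tilde V_0\cup\cdots\cup\tilde V_d$ (the required relative cup products are defined for open covers of the paracompact space $X$) gives $\ell(X)\le\sum_{i=0}^{d}\ell(\tilde V_i)$. Each $\tilde V_i$ is a finite disjoint union of the $\tilde W_\alpha$, and each $\tilde W_\alpha$ admits a $G$-map to an orbit $G/G_{x_{j(\alpha)}}\in\mathcal{A}$, hence there is a $G$-map $\tilde V_i\to\bigsqcup_{\alpha\in C_i}G/G_{x_{j(\alpha)}}$. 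Monotonicity of $\ell$ together with the strong normalization property then force $\ell(\tilde V_i)\le 1$, whence $\ell(X)\le d+1=\dim X+1$. This is precisely the step that breaks down without strong normalization, where one would only obtain $\ell(\tilde V_i)\le\#C_i$.

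The main obstacle — and the reason the full argument is technical — is the bookkeeping around the slice theorem over a possibly non-metrizable compact base: justifying $\dim X/G\le\dim X$ and the covering-dimension decomposition theorem in the requisite generality, and checking that the multiplicative structure of $h^*$ supplies the cup products needed for the subadditivity of $\ell$ over the open cover $\{\tilde V_i\}$. Once those standard but fiddly points are settled, both directions are short; the finite-orbit-type hypothesis serves to keep the tube structure manageable, and finiteness of $G$ in the converse is what collapses $\dim{}+1$ to $1$.
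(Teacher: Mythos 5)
Your argument takes a genuinely different route from the paper's (which is relegated to a commented-out section and uses Segal's nerve construction rather than slices). The paper associates to each finite open $G$-invariant cover $\mathcal{U}$ of the compact space $X$ the auxiliary space $\mathcal{W}_\mathcal{U}\subseteq X\times|\mathcal{N}_\mathcal{U}|$ with its skeletal filtration $\mathcal{W}_\mathcal{U}^0\subseteq\cdots\subseteq\mathcal{W}_\mathcal{U}$, defines $h^m_p(X)=\ker\!\bigl[h^m(X)\to h^m(\mathcal{W}^{p-1}_\mathcal{U})\bigr]$ for suitable $\mathcal{U}$, shows that the cup product respects this filtration, identifies $h^*_1(X)$ with $\bigcap_{x\in X}\ker\!\bigl[h^*(X)\to h^*(G/G_x)\bigr]$ via continuity of $h^*$, and concludes because $h^*_{\dim X+1}(X)=0$. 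Crucially, all of this takes place \emph{inside} $h^*(X)$ for the compact space $X$; the argument never evaluates $\ell$ on a non-compact subspace. In contrast, you cover $X$ by tubes from the slice theorem, use an Ostrand-type decomposition of a $d$-dimensional cover of $X/G$ into $d+1$ disjoint families, pull back, and apply subadditivity (Theorem~\ref{ell_properties}(2)) to the open $G$-invariant pieces $\tilde V_0,\dots,\tilde V_d$. Both approaches use monotonicity plus strong normalization at exactly one step, and your treatment of the converse is identical to the paper's.

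The gap is that your forward argument, as written, does not cover the $K_G^*$-based length. The sets $\tilde V_i$ are open and in general non-compact, and, as the paper itself emphasizes in Remark~\ref{remark:warning_KG}, a $K_G^*$-based length is simply not defined for non-compact $G$-spaces (there is no $p_{\tilde V_i}^*$), so the inequality $\ell(X)\le\sum_i\ell(\tilde V_i)$ and the subsequent estimate $\ell(\tilde V_i)\le 1$ are both meaningless there. You flag the "cup product" point as fiddly bookkeeping, but this is a different and more serious issue: it is precisely what the Segal-nerve machinery is designed to sidestep, by replacing $\ell(\tilde V_i)$ with computations in the fixed ring $h^*(X)$. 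Your approach can likely be repaired for $K_G^*$ by passing to a closed shrinking of the disjoint families (normality of $X/G$ permits this, disjointness is preserved, and the pieces become compact), and then invoking the multiplicativity of $K_G^*$ for closed covers of compact spaces rather than for open excisive pairs — but that repair is not in your write-up and is not merely a technicality. For a Borel $H_G^*$-based length your argument is essentially sound, modulo the dimension-theoretic facts ($\dim X/G\le\dim X$, Ostrand's theorem for normal spaces) that you already acknowledge.
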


\noindent\textbf{Acknowledgements.}
The first and second authors have been supported by the National Science Centre under grants 2014/12/S/ST1/00368 and 2015/19/B/ST1/01458, respectively. The third author has been supported by DST INSPIRE Scheme IFA-11MA-01/2011 and MTR/2017/000018.

We also acknowledge that in his PhD thesis N. A. Silva \cite{Nelson} obtained results which partially coincide with ours: he was mainly interested in the case when~$X$ is a $\textnormal{mod}\,p$ cohomology sphere.

\end{document}